\newtheorem{thm}{Theorem}[section]
\newtheorem{lem}[thm]{Lemma}
\newtheorem{coro}[thm]{Corollary}
\newtheorem{defi}[thm]{Definition}
\newtheorem{prob}[thm]{Problem}
\newtheorem{rem}{Remark}[section]
\renewenvironment{abstract}{%
        \small
        \quotation
         \noindent {\bfseries \abstractname } }%
\def\f{\frac}
\def\d{\mathrm d}
\def\e{\mathrm e}
\def\i{\mathrm i}
\def\R{\mathbb R}
\def\N{\mathbb N}
\def\C{\mathbb C}
\def\A{\mathcal{A}}
\def\de{\delta}
\def\al{\alpha}
\def\be{\beta}
\def\ga{\gamma}
\def\Ga{\Gamma}
\def\ep{\epsilon}
\def\ve{\varepsilon}
\def\te{\theta}
\def\la{\lambda}
\def\na{\nabla}
\def\Om{\Omega}
\def\e{\mathrm{e}}
\def\d{\mathrm{d}}
\def\ov{\overline}
\def\pa{\partial}
\def\Re{\mathrm{\,Re\,}}
\def\Im{\mathrm{\,Im\,}}
\def\wh{\widehat}
\def\wt{\widetilde}
\def\LL{{L^2(\Om)}}
\def\L{\mathcal{L}}
\def\HH{{H^2(\Om)}}
\numberwithin{equation}{section}
\title{\Large\bf\boldmath
Initial-boundary value problems for multi-term time-fractional diffusion equations with $x$-dependent coefficients}
\author{
\large Zhiyuan LI$^\dag$\qquad Xinchi HUANG$^\ddag$ \qquad Masahiro YAMAMOTO$^\ddag$}
\date{}
\begin{document}
\maketitle

\renewcommand{\thefootnote}{\fnsymbol{footnote}}
\footnotetext{\hspace*{-5mm} 
\begin{tabular}{@{}r@{}p{14cm}@{}} 
& Manuscript last updated: \today.\\
$^\dag$ 
& School of Mathematics and Statistics, Shandong University of Technology, Zibo, Shandong 255049, China
E-mail: zyli@sdut.edu.cn\\
$^\ddag$ 
& Graduate School of Mathematical Sciences, 
the University of Tokyo,
3-8-1 Komaba, Meguro-ku, Tokyo 153-8914, Japan. 
E-mail: huangxc@ms.u-tokyo.ac.jp, 
myama@ms.u-tokyo.ac.jp.
\end{tabular}}

 
\begin{abstract}
In this paper, we discuss an initial-boundary value problem (IBVP) for the multi-term time-fractional diffusion equation with $x$-dependent coefficients. By means of the Mittag-Leffler functions and the eigenfunction expansion, we reduce the IBVP to an equivalent integral equation to show the unique existence and the analyticity of the solution for the equation. Especially, in the case where all the coefficients of the time-fractional derivatives are non-negative, by the Laplace and inversion Laplace transforms, it turns out that the decay rate of the solution for long time is dominated by the lowest order of the time-fractional derivatives. Finally, as an application of the analyticity of the solution, the uniqueness of an inverse problem in determining the fractional orders in the multi-term time-fractional diffusion equations from one interior point observation is established.
\end{abstract}

\textbf{Keywords:}
initial-boundary value problem, time-fractional diffusion equation, asymptotic behavior, inverse problem, analyticity  

\section{Introduction}
\label{sec-intro}

In recent decades, more experimental data from diffusion processes in highly heterogeneous media indicate anomalous phenomena which cannot be described by the classical diffusion model with integer order derivative, for example, Adams and Gelhar \cite{AG92} points out that field data in a saturated zone of a highly heterogeneous aquifer indicate a long-tailed profile in the spatial distribution of densities as the time passes, which is difficult to be interpreted by the Gaussian processes. We also refer to Benson, Wheatcraft and Meerschaert \cite{BWM00} and Levy and Berkowitz \cite{LB03} for the anomalous phenomenon such as the non-Fickian growth rates and skewness exhibited in the solute concentration profiles. The above anomalous phenomenon has been investigated by many researchers, and see Berkowitz, Scher and Silliman \cite{BSS00}, Giona, Cerbelli and Roman \cite{GGR92}, Y. Hatano and N. Hatano \cite{HH98}, and the time-fractional diffusion equation: $\partial_t^\alpha u = \Delta u + F$, $(x,t)\in\mathbb R^d\times(0,\infty)$, $\alpha\in(0,1)$, is used. 
Here the Caputo derivative $\partial_t^\alpha$ is defined by 
$$
\partial_t^\alpha g(t) = \frac{1}{\Gamma(1-\alpha)} 
\int_0^t (t-\tau)^{-\alpha}\frac{d g(\tau)}{d\tau} d\tau
$$
(e.g., Kilbas, Srivastave and Trujillo \cite{KST06} and Podlubny \cite{P99}).
Here and henceforth, $\Gamma(\cdot)$ denotes the Gamma function. This model is presented as a useful approach for the description of transport dynamics in complex systems that are governed by anomalous diffusion and non-exponential relaxation patterns, and has attracted great attention in different areas. Theoretical researches have been developing rapidly, and here we refer only to a part of references: Gorenflo, Luchko and Yamamoto \cite{GLY15}, Gorenflo, Luchko and Zabrejko \cite{GLZ99}, Kubica and Yamamoto \cite{KY17}, Luchko \cite{L10},
Luchko and Gorenflo \cite{LG99}, Metzler and Klafter \cite{MK00}, Roman and Alemany \cite{RA94}, Sakamoto and Yamamoto \cite{SY11}, Xu, Cheng and Yamamoto \cite{XCY11} and Zacher \cite{Z09}.

Some recent publications such as e.g., Chechkin, Gorenflo and Sokolov \cite{CGS02}, Kubica and  Ryszewska \cite{KR17}, Kochubei \cite{K08} and Li, Luchko and Yamamoto \cite{LiLuYa14} investigate the time-fractional diffusion equations of distributed order derivative which is an integral of fractional derivatives with respect to continuously changing orders. Here we should mention an important particular case of the time-fractional diffusion equation of distributed order, that is, the weight function is taken in the form of a finite linear combination of the Dirac $\delta$-functions with the non-negative weight coefficients. This yields a diffusion equation with multiple time-fractional derivatives, which is the main focus of this paper. We deal with the following  initial-boundary value problem for the multi-term time-fractional diffusion equation:
\begin{equation}
\label{equ-multifrac}
\left\{ 
\begin{alignedat}{2}
&\sum_{j=1}^\ell q_j(x) \partial_t^{\alpha_j} u
=-\mathcal A u + B(x)\cdot\nabla u +b(x)u,
&\quad &(x,t)\in \Omega\times(0,T), 
\\
&u(x,0)=a(x), &\quad &x\in \Omega,
\\
&u(x,t)=0, &\quad &(x,t)\in \partial\Omega\times(0,T),
\end{alignedat}
\right.
\end{equation}
where $\Omega$ is a bounded domain in $\mathbb R^d$ with sufficiently smooth boundary $\partial\Omega$, for example, of $C^2$-class, and we assume $q_1=1$, $0<\alpha_\ell <\dots<\alpha_1<1$.  $-\mathcal A$ is a symmetric uniformly elliptic operator with the homogeneous Dirichlet boundary condition defined for $u\in H^2(\Omega) \cap H_0^1(\Omega)$:
$$
 (-\mathcal A u)(x)
= \sum_{i,j=1}^d \frac{\partial}{\partial x_i}\left(a_{ij}(x)
\frac{\partial}{\partial x_j}u(x)\right),
\quad x\in \Omega,
$$where $a_{ij}=a_{ji}\in C^1(\overline\Omega)$, $1\le i,j\le d$. Moreover there exists a constant $\nu>0$ such that
$$
\nu \sum_{j=1}^d \xi_j^2 
\le \sum_{j,k=1}^d a_{jk}(x) \xi_j\xi_k, 
\quad x\in\overline{\Omega},\ \xi\in\mathbb R^d.
$$

The mathematical investigation of the multi-term time-fractional diffusion equation has been already appeared in many publications. Here we limit ourselves to a few references and we do not intend a comprehensive list of the related works. In Daftardar-Gejji and Bhalekar \cite{DGB08}, a solution to an IBVP is formally represented by Fourier series and the multivariate Mittag-Leffler function. However no proofs for the convergence of the series are given in \cite{DGB08}. A proof of the convergence of the series defining the solution of the multi-term time-fractional diffusion equation with positive constant coefficients can be found in the paper Li, Liu and Yamamoto \cite{LiLiuYa14}. Jiang, Liu, Turner and Burrage \cite{JLTB12} discusses the case where the spatial dimension is one, the coefficients are constants and the spatial fractional derivative is considered, and establishes the formula of the solution. Luchko \cite{L11} proves the unique existence of the classical solution, the maximum principle and related properties in the case where the coefficients of the time derivatives are positive and independent of $x$, and the arguments are based on the Fourier method, that is, the separation of the variables. It reveals that these papers mainly discuss the case where the spatial differential operators are symmetric elliptic operators and the coefficients of time-fractional derivatives are positive constants. 

In this paper, we continue the research activities initiated in \cite{DGB08, 
JLTB12, LiLiuYa14, L11}  and investigate non-symmetric diffusion equations 
with the variable coefficients of fractional time derivatives which can be 
regarded as more feasible model equation in modeling diffusion in highly 
heterogeneous media with convection. Firstly, we discuss the unique existence as well as regularity of the solution to the IBVP \eqref{equ-multifrac}, which should be the starting points for further researches concerning the theory of non-linear fractional diffusion equations, numerical analysis and control theory. Secondly, we investigate some further properties of the solution including the analyticity and long-time asymptotic behavior of the solution. The asymptotic behavior of solutions to the equations, which describes some physical processes, is important both by itself and for analysis of the suitable numerical methods for their solutions and the inverse problems for these equations. For example, as a byproduct of the properties of the solution, we can show the uniqueness of an inverse problem of the determination of the fractional orders from one interior point observation. 

The rest of this article is organized as follows: In Section \ref{sec-thms}, by means of the Mittag-Leffler functions and the eigenfunction expansion, a formal integral equation of the solution of our problem \eqref{equ-multifrac} is first constructed, from which we further introduce a definition of the mild solution. The unique existence as well as the analyticity of the mild solution is proved in Section \ref{sec-wellposed}, whereas the long-time asymptotic formulas are given in Section \ref{sec-asymp}. In Section \ref{sec-IP}, we will give a proof of the uniqueness for an inverse problem of determining the fractional orders from one interior point observation. Finally, the last section is devoted to some conclusions and the statements of open problems.

 \section{Mild solution and main results}
 \label{sec-thms}
In this section, we solve the IBVP \eqref{equ-multifrac}. To this end, we start with fixing some general settings and notations. Let $L^2(\Omega)$ be a usual $L^2$-space with the inner product $(\cdot,\cdot)$, and $H^k(\Omega)$, $H_0^1(\Omega)$ denote usual Sobolev spaces (e.g., Adams \cite{A75}). We set $\|a\|_{L^2(\Omega)}=(a,a)_{L^2(\Omega)}^{\frac12}$. We define an operator $A$ in $L^2(\Omega)$ by
$$
(Af)(x)=(\mathcal A f)(x),\ x\in\Omega,\quad f\in D(A):=H^2(\Omega) \cap H_0^1(\Omega).
$$
Then the fractional power $A^\gamma$ is defined for $\gamma\in\mathbb R$ (e.g., Pazy \cite{P83}), and $D(A^\gamma)\subset H^{2\gamma}(\Omega)$, $D(A^{\frac 1 2})=H_0^1(\Omega)$ for example. Since $A$ is a symmetric uniformly elliptic operator, the spectrum of $A$ is entirely composed of eigenvalues and counting according to the multiplicities, we can set $0<\lambda_1 \le \lambda_2 \le \dots$. By $\phi_n \in D(A)$, we denote an orthonormal eigenfunction corresponding to $\lambda_n$: $A\phi_n=\lambda_n\phi_n$. Then the sequence $\{\phi_n\}_{n\in\mathbb N}$ is orthonormal basis in $L^2(\Omega)$. Moreover, we see that 
$$
A^\gamma \phi = \sum_{n=1}^\infty \lambda_n^\gamma (\phi,\phi_n) \phi_n,
$$
where
$$
\phi\in D(A^\gamma) 
:= \left\{\psi\in L^2(\Omega): \sum_{n=1}^\infty \lambda_n^{2\gamma} |(\psi,\phi_n)|^2<\infty\right\} 
$$
and that $D(A^{\gamma})$ is a Hilbert space with the norm
$$
\|\psi\|_{D(A^{\gamma})}
=\left( \sum_{n=1}^\infty \lambda_n^{2\gamma} |(\psi,\phi_n)|^2 \right)^{\frac 1 2}.
$$
Moreover we define the Mittag-Leffler function by
$$
E_{\alpha,\beta}(z):=\sum_{k=0}^\infty \frac{z^k}{\Gamma(\alpha k+\beta)},\ z\in\mathbb C,
$$
where $\alpha,\beta>0$ are arbitrary constants. The above formula and the classical asymptotics
\begin{equation}
\label{esti-Gamma}
\Gamma(\eta) 
= e^{-\eta}\eta^{\eta-\frac 1 2}(2\pi)^{\frac 1 2}
\left(1 + O\left(\frac{1}{\eta}\right)\right) 
\quad \mbox{as $\eta\rightarrow +\infty$}
\end{equation}
(e.g., Abramowitz and Stegun \cite{AS72}, p.257) imply that the radius of convergence is $\infty$ and so $E_{\alpha,\beta}(z)$ is an entire function of $z\in\mathbb C$. Furthermore, the following useful lemma holds:
\begin{lem}
\label{lem-ML}
{\rm(i)} 
Let $0<\alpha<2$ and $\beta>0$ be arbitrary. We suppose that $\frac{\pi}{2}\alpha<\mu<\min\{\pi,\pi\alpha\}$. Then there exists a constant $C= C(\alpha,\beta, \mu)>0$ such that
$$
|E_{\alpha,\beta}(z)| \le \frac{C}{1+|z|},
\quad \mu\le |\arg z|\le\pi.
$$
{\rm(ii)} 
For $\lambda>0$, $\alpha>0$ and positive integer $n\in\mathbb N$, we have
$$
\frac{d^n}{dt^n}E_{\alpha,1}(-\lambda t^\alpha)
= -\lambda t^{\alpha-n} E_{\alpha,\alpha-n+1}(-\lambda t^\alpha),
\quad t>0. 
$$
Moreover, $E_{\alpha,1}(-\lambda t^\alpha)$ with $0 < \alpha < 1$ is completely monotonic, that is, $(-1)^n\frac{d^n}{d t^n} E_{\alpha,1}(-\lambda t^\alpha)\ge 0$ for all $t > 0$ and $n = 0,1,\dots$.
\end{lem}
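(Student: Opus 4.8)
The plan is to treat the two parts separately, since they call for rather different arguments. Part~(i), a decay estimate for $E_{\alpha,\beta}$ in a sector of $\mathbb C$, I would derive from a Hankel-type contour representation; part~(ii) I would split into an elementary termwise differentiation of the power series and the classical complete monotonicity of the one-parameter Mittag-Leffler function.

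For part~(i), I would start from Hankel's loop integral $\frac{1}{\Gamma(s)}=\frac{1}{2\pi i}\int_{\mathcal H}e^{u}u^{-s}\,du$ and, via the change of variables $u=\zeta^{1/\alpha}$ and summation of the geometric series $\sum_{k\ge0}(z/\zeta)^{k}$, arrive at the representation
\[
E_{\alpha,\beta}(z)=\frac{1}{2\pi i\,\alpha}\int_{\gamma(\epsilon,\theta)}\frac{e^{\zeta^{1/\alpha}}\,\zeta^{(1-\beta)/\alpha}}{\zeta-z}\,d\zeta ,
\]
where $\gamma(\epsilon,\theta)$ consists of the two rays $\{|\zeta|\ge\epsilon,\ \arg\zeta=\pm\theta\}$ and the arc $\{|\zeta|=\epsilon,\ |\arg\zeta|\le\theta\}$, traversed with $\arg\zeta$ nondecreasing, and $\theta$ is chosen with $\frac{\pi}{2}\alpha<\theta<\mu$, which is possible precisely because $\frac{\pi}{2}\alpha<\mu<\min\{\pi,\pi\alpha\}$. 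The bounds $\frac{\pi}{2}\alpha<\theta<\pi\alpha$ give $\theta/\alpha\in(\frac{\pi}{2},\pi)$, hence $\mathrm{Re}\,\zeta^{1/\alpha}\to-\infty$ along the rays, so the integral converges absolutely and dominates every power of $|\zeta|$ — this is exactly where the hypothesis on $\mu$ is used. For $z$ with $\mu\le|\arg z|\le\pi$ (hence $|\arg z|>\theta$), the point $z$ lies in the same connected component of $\mathbb C\setminus\gamma(\epsilon,\theta)$ as the disc $\{|z|<\epsilon\}$, so the representation extends there by analytic continuation, and the angular gap $\mu-\theta>0$ forces $|\zeta-z|\ge c|z|$ on $\gamma(\epsilon,\theta)$ whenever $|z|\ge\epsilon$. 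Inserting $\frac{1}{\zeta-z}=-\frac{1}{z}+\frac{\zeta}{z(\zeta-z)}$ and evaluating the first term by Hankel's formula once more yields $E_{\alpha,\beta}(z)=-\frac{z^{-1}}{\Gamma(\beta-\alpha)}+R(z)$, and the estimates $|\zeta-z|\ge c|z|$ together with the super-exponential decay $|e^{\zeta^{1/\alpha}}|\le e^{-c'|\zeta|^{1/\alpha}}$ on the rays give $|R(z)|\le C|z|^{-2}$ uniformly in the sector. Thus $|E_{\alpha,\beta}(z)|\le C|z|^{-1}$ for $|z|\ge\epsilon$, while $E_{\alpha,\beta}$, being entire, is bounded on $\{|z|\le\epsilon\}$; combining the two estimates gives $|E_{\alpha,\beta}(z)|\le C/(1+|z|)$. (The same estimate is Theorem~1.6 in Podlubny \cite{P99}.)

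For part~(ii), the derivative identity is a direct computation: writing $E_{\alpha,1}(-\lambda t^{\alpha})=\sum_{k\ge0}\frac{(-\lambda)^{k}}{\Gamma(\alpha k+1)}t^{\alpha k}$, the series obtained by differentiating $n$ times term by term converges locally uniformly on $(0,\infty)$ — the coefficients $1/|\Gamma(\alpha k-n+1)|$ decay super-exponentially in $k$ by \eqref{esti-Gamma}, while $t^{\alpha k-n}$ is bounded on compact subsets of $(0,\infty)$ — so differentiation under the sum is legitimate and, after the shift $m=k-1$,
\[
\frac{d^{n}}{dt^{n}}E_{\alpha,1}(-\lambda t^{\alpha})=\sum_{k\ge1}\frac{(-\lambda)^{k}}{\Gamma(\alpha k-n+1)}t^{\alpha k-n}=-\lambda t^{\alpha-n}\sum_{m\ge0}\frac{(-\lambda)^{m}}{\Gamma(\alpha m+\alpha-n+1)}t^{\alpha m}=-\lambda t^{\alpha-n}E_{\alpha,\alpha-n+1}(-\lambda t^{\alpha}).
\]
For the complete monotonicity when $0<\alpha<1$, I would pass through the Laplace transform: term-by-term Laplace transformation gives $\int_{0}^{\infty}e^{-st}E_{\alpha,1}(-\lambda t^{\alpha})\,dt=\frac{s^{\alpha-1}}{s^{\alpha}+\lambda}$, which (since $s^{\alpha}+\lambda\ne0$ for $s\in\mathbb C\setminus(-\infty,0]$ when $0<\alpha<1$) is holomorphic off the negative real axis; deforming the Bromwich line onto the cut produces $E_{\alpha,1}(-\lambda t^{\alpha})=\int_{0}^{\infty}e^{-rt}K_{\lambda}(r)\,dr$ with
\[
K_{\lambda}(r)=\frac{\sin(\pi\alpha)}{\pi}\cdot\frac{\lambda\,r^{\alpha-1}}{(r^{\alpha}+\lambda\cos\pi\alpha)^{2}+(\lambda\sin\pi\alpha)^{2}}\ge0 ,
\]
so that $(-1)^{n}\frac{d^{n}}{dt^{n}}E_{\alpha,1}(-\lambda t^{\alpha})=\int_{0}^{\infty}r^{n}e^{-rt}K_{\lambda}(r)\,dr\ge0$ for all $t>0$ and $n\ge0$. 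Alternatively, one may combine the classical fact that $x\mapsto E_{\alpha,1}(-x)$ is completely monotonic on $[0,\infty)$ with the observation that $t\mapsto\lambda t^{\alpha}$ is a Bernstein function for $0<\alpha<1$ and the stability of complete monotonicity under composition with Bernstein functions.

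I expect the main obstacle to be the sectorial estimate in~(i): beyond setting up the Hankel representation, the delicate point is making the remainder bound uniform across the \emph{closed} sector $\mu\le|\arg z|\le\pi$, which is where both ends of the hypothesis $\frac{\pi}{2}\alpha<\mu<\min\{\pi,\pi\alpha\}$ enter — the lower bound to guarantee convergence of the contour integral, and the upper bound to ensure the existence of an admissible contour angle $\theta\in(\frac{\pi}{2}\alpha,\mu)$ with $\cos(\theta/\alpha)<0$. The termwise differentiation in~(ii) and, once the Laplace representation is in hand, the complete monotonicity, are routine.
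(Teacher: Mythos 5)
Your proof is correct and coincides with the route the paper itself takes: for (i) the paper simply cites Gorenflo--Mainardi and Podlubny, and your Hankel-contour representation with the splitting $\frac{1}{\zeta-z}=-\frac{1}{z}+\frac{\zeta}{z(\zeta-z)}$ is precisely the argument given there (Podlubny, Theorem~1.6), while for the derivative identity in (ii) the paper's one-line justification is exactly your termwise differentiation. The complete monotonicity, which the paper states without proof, is correctly supplied by your spectral (Laplace-inversion) representation with the nonnegative density $K_\lambda$ --- the classical Pollard-type argument.
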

The proof of (i) can be found in Gorenflo and Mainardi \cite{GM97}, on p. 35 in Podlubny \cite{P99}. By the series expansion of the Mittag-Leffler function $E_{\alpha,\beta}$, the termwise differentiation yields (ii).
 
Now we define an operator $S(z):L^2(\Omega)\rightarrow L^2(\Omega)$  for $z\in\{z\in\C\setminus\{0\};\, |\arg z|<\frac{\pi}{2}\}$ by
\begin{align}
\label{def-S(t)}
S(z)a:=\sum_{n=1}^\infty (a,\phi_n) E_{\alpha_1,1}(-\lambda_nz^{\alpha_1}) \phi_n,
\quad a\in L^2(\Omega).
\end{align}
In view of (ii) in Lemma \ref{lem-ML}, the termwise differentiation gives
\begin{align*}
S^{(j)}(z)a
&:=-\sum_{n=1}^\infty \lambda_n(a,\phi_n) z^{\alpha_1-j}
        E_{\alpha_1,\alpha_1-j+1}(-\lambda_n z^{\alpha_1})\phi_n,
\quad j=1,2
\end{align*}
for $a\in L^2(\Omega)$, where $S^{(j)}(a)$ stands for $\frac{d^jS(z)a}{dz^j}$. We also adopt the abbreviation $S'(z)a:=\frac{dS(z)a}{dz}$ and $S''(z)a:=\frac{d^2S(z)a}{dz^2}$ for later use. Moreover, by Lemma 2.1 (i), we can prove that there exists a constant $C>0$ such that 
\begin{equation}
\label{esti-S(t)}
\|A^{\gamma-1}S^{(j)}(z)\|_{L^2(\Omega)\to L^2(\Omega)}
\leq C |z|^{\alpha_1-j-\alpha_1 \gamma}, 
\quad j=0,1,2
\end{equation}
for $z\in\{z\in\C\setminus\{0\};\, |\arg z|<\frac{\pi}2\}$ and $0\le\gamma\le1$, where $\|\cdot\|_{L^2(\Omega)\to L^2(\Omega)}$ denotes the operator norm from $L^2(\Omega)$ to $L^2(\Omega)$. 

Next we give the definition of the mild solution to \eqref{equ-multifrac}. As for mild solution for parabolic equation, see Pazy \cite{P83}, and here we need more arguments for fractional time-derivatives as follows. For this, we formally show an integral equation which is equivalent to \eqref{equ-multifrac}, which is only composed of $u,\nabla u$ without the time derivative of the solution. Indeed, from Sakamoto and Yamamoto \cite{SY11}, by regarding $-\sum_{j=2}^\ell q_j\partial_t^{\alpha_j} u+ B\cdot \nabla u + b u$  as non-homogeneous term in \eqref{equ-multifrac}, we have
$$
u(t) 
=S(t)a-\int^t_0 A^{-1}S'(t-s)(B\cdot \nabla u(s)
  + b u(s)) \d s
+ \sum^\ell_{j=2} \int^t_0 A^{-1}S'(t-s) q_j\partial_t^{\alpha_j} u(s) \d s.
$$
We consider the last term
$$
\sum_{j=2}^\ell \int^t_0 A^{-1}S'(t-s) q_j \partial_t^{\alpha_j} u(s) \d s.
$$
From the definition of Caputo fractional derivative, we have  
\begin{align*}
&\int^t_0 A^{-1}S'(t-s) 
    \Big(q_j\partial_t^{\alpha_j}u(s)\Big) \d s
=\int_0^t A^{-1}S'(t-s) \frac{1}{\Gamma(1-\alpha_j)}
\left(\int_0^s (s-r)^{-\alpha_j} q_j u'(r) dr\right) \d s,
\end{align*}
where we denote $u'(t):=\frac{d u}{d t}(t)$. By Fubini's theorem we exchange the orders of integrals and change the variable $s\to\xi$ by $\xi:=\frac{s-r}{t-r}$ to obtain
\begin{align*}
&\int^t_0 A^{-1}S'(t-s) q_j\partial_t^{\alpha_j}u(s) \d s 
= \int_0^t \left(\int_r^t A^{-1}S'(t-s) 
     \frac{(s-r)^{-\alpha_j}}{\Gamma(1-\alpha_j)} \d s \right) 
     q_j u'(r) \d r\\
=& \int^t_0\left(
           \int^1_0 A^{-1}S'\big((1-\xi)(t-r)\big) \frac{\xi^{-\alpha_j}}{\Gamma(1-\alpha_j)} \d\xi\right)
             (t-r)^{1-\alpha_j} q_j u'(r) \d r=:\frac{I(t)}{\Gamma(1-\alpha_j)}.
\end{align*}
Since the integrands have singularities at $\xi=0, 1$ and $r=t$, we should understand that 
\begin{align*}
I(t) &= \lim_{\ep_1, \ep_2, \ep_3 \downarrow 0}
\int_0^{t-\ep_3} \Big(\int^{1-\ep_1}_{\ep_2} 
A^{-1}S'\big((1-\xi)(t-r)\big) \xi^{-\al_j}  \d\xi \Big) 
(t-r)^{1-\al_j} q_j u'(r)\d r
\\
&=:\lim_{\ep_1, \ep_2, \ep_3 \downarrow 0}I_{\ep_1,\ep_2,\ep_3}(t).
\end{align*}
For computing the limit of $I_{\ep_1,\ep_2,\ep_3}(t)$ as $\ep_1, \ep_2, \ep_3 \downarrow 0$, we need take some further treatment on $I_{\ep_1,\ep_2,\ep_3}(t)$.  For this, integration by parts yields
\begin{align*}
I_{\ep_1,\ep_2,\ep_3}(t)
=& \left(\int_{\ep_2}^{1-\ep_1} A^{-1}S'\big((1-\xi)(t-r)\big) \xi^{-\al_j}\d\xi \right) 
      q_j u(r)(t-r)^{1-\al_j} 
\Big|_{r=0}^{r=t-\ep_3}\\
&+ \int_0^{t-\ep_3}\left(\int_{\ep_2}^{1-\ep_1} A^{-1}S''\big((1-\xi)(t-r)\big)
(1-\xi) \xi^{-\al_j}  \d\xi \right) (t-r)^{1-\al_j}
 q_ju(r)\d r\\
&+ \int_0^{t-\ep_3}\left(\int_{\ep_2}^{1-\ep_1} A^{-1}S'\big((1-\xi)(t-r)\big) \xi^{-\al_j}\d\xi\right)
   (1-\al_j) (t-r)^{-\al_j} q_j u(r)\d r
\\
=&: \sum_{k=1}^3 I_{\ep_1,\ep_2,\ep_3}^{(k)}(t).
\end{align*}
We evaluate each of the above three terms separately. First for $I_{\ep_1,\ep_2,\ep_3}^{(1)}(t)$, we conclude from \eqref{esti-S(t)} that
\begin{align*}
&\left\|
\int_{\ep_2}^{1-\ep_1} A^{-1}S'\big((1-\xi)(t-r)\big) \xi^{-\al_j} \d\xi
\right\|_{\LL\to\LL}
\le C\int_{\ep_2}^{1-\ep_1} \big((1-\xi)(t-r)\big)^{\al_1-1} \xi^{-\al_j}\d\xi.
\end{align*}
Moreover from the property of the Beta function that
\begin{equation}
\label{equ-beta}
\int_0^1 (1-\xi)^{\al-1} \xi^{\be-1} \d\xi  
=\f{\Ga(\al)\Ga(\be)} {\Ga(\al+\be)}<\infty,\ \al,\be>0,
\end{equation}
by $\al_1 > \al_j$ ($j=2,..., \ell$), for $r=t-\ep_3$ we have
\begin{align*}
&\left\|
\int_{\ep_2}^{1-\ep_1} A^{-1}S'((1-\xi)(t-r))
\xi^{-\al_j} \d\xi q_ju(r)(t-r)^{1-\al_j} 
\right\|_\LL
\le  C\ep_3^{\al_1-\al_j} \|u\|_{L^\infty(0,T;\LL)}
\to 0
\end{align*}
as $\ep_3 \to 0$.  Hence by $u(0) = a$, we see that
$$
\lim_{\ep_1,\ep_2, \ep_3\downarrow 0} I_{\ep_1,\ep_2,\ep_3}^{(1)}(t) 
= t^{1-\al_j} \int_0^1 A^{-1}S'((1-\xi)t)  \xi^{-\al_j} q_ja \d\xi.
$$ 
Next we estimate $I_{\ep_1,\ep_2,\ep_3}^{(2)}(t)$, again by using \eqref{esti-S(t)}, it follows that
$$
\|I_{\ep_1,\ep_2,\ep_3}^{(2)}(t) \|_\LL 
\le \int_0^{t-\ep_3} \left(\int_{\ep_2}^{1-\ep_1} (1-\xi)^{\al_1-1} \xi^{-\al_j}\d\xi\right)
(t-r)^{\al_1-\al_j-1}\d r \|u\|_{L^\infty(0,T;\LL)},
$$
the integrand is integrable in $0 < \xi < 1$ and $0 < r <t$ in view of \eqref{equ-beta} and we take the limit as $\ep_1,\ep_2,\ep_3 \downarrow 0$ to derive
$$
\lim_{\ep_1,\ep_2,\ep_3 \downarrow 0} I_{\ep_1,\ep_2,\ep_3 }^{(2)}(t)
=\int_0^t \left(\int_0^1 A^{-1}S''\big((1-\xi)(t-r)\big)
(1-\xi) \xi^{-\al_j}  \d\xi \right) (t-r)^{1-\al_j}
 q_ju(r)\d r.
$$
Finally, for $I_{\ep_1,\ep_2,\ep_3}^{(3)}(t)$ we argue similarly to obtain
\begin{align*}
& \lim_{\ep_1,\ep_2, \ep_3\downarrow 0} I_{\ep_1,\ep_2,\ep_3}^{(3)}(t) 
= (1-\al_j) \int^t_0 \left( \int^1_0 A^{-1}S'((1-\xi)(t-r)) \xi^{-\al_j} \d\xi\right)
(t-r)^{-\al_j} q_j u(r)\d r.
\end{align*}
Thus 
\begin{align*}
I(t)=&t^{1-\al_j} \int_0^1 A^{-1}S'((1-\xi)t)  \xi^{-\al_j} q_ja \d\xi\\
 &+  \int^t_0\left(\int^1_0 A^{-1}S''((1-\xi)(t-r)) (1-\xi)\xi^{-\al_j}\d\xi\right)
    (t-r)^{1-\al_j} q_ju(r)\d r\\
&+ (1-\al_j) \int^t_0 \left( \int^1_0 A^{-1}S'((1-\xi)(t-r)) \xi^{-\al_j} \d\xi\right)
(t-r)^{-\al_j} q_j u(r)\d r.
\end{align*}
Consequently we have
\begin{align}
\label{equ-int-u}
u(t) 
&=S(t)a+ \int_0^t A^{-1}S'(t-r)  \sum_{j=2}^\ell\frac{r^{-\alpha_j} q_ja}{\Gamma(1-\alpha_j)}\d r
  -\int^t_0 A^{-1}S'(t-r)(B\cdot \nabla u(r) + bu(r)) \d r
\nonumber\\
&+\int^t_0\int^1_0 A^{-1}S''\big((1-s)(t-r)\big)(1-s) \sum_{j=2}^\ell \frac{(t-r)^{1-\alpha_j} s^{-\alpha_j}q_ju(r)  }{\Gamma(1-\alpha_j)} \d s \d r
\nonumber\\
&+ \int^t_0 \int^1_0 A^{-1}S'\big((1-s)(t-r)\big) \sum_{j=2}^\ell \frac{(1-\alpha_j)(t-r)^{-\alpha_j} s^{-\alpha_j} q_j u(r)}{\Gamma(1-\alpha_j)} \d s \d r
=:\sum_{j=1}^5 I_j.
\end{align}

Now on the basis of the above integral equation of the solution $u$ which solves the IBVP \eqref{equ-multifrac}, we are ready to give the definition of the mild solution to \eqref{equ-multifrac}.
\begin{defi}[Mild solution]
\label{def-u-homo}
Let  $a\in L^2(\Omega)$, we call $u\in C([0,T];L^2(\Omega))\cap C((0,T];H_0^1(\Omega))$  a mild solution to the initial-boundary value problem \eqref{equ-multifrac} if it satisfies the integral equation \eqref{equ-int-u}.
\end{defi}
The above definition of the mild solution is well-defined because of our first main result:
\begin{thm}
\label{thm-homo}
Let $0<\alpha_\ell<\dots<\alpha_1<1$ and $T>0$ be fixed constants. Assuming that $q_j\in W^{2,\infty}(\Omega)$ $(j=2,\dots,\ell)$, $B\in (L^{\infty}(\Omega))^d$ and $b\in L^\infty(\Omega)$. Then for any fixed constant $\gamma\in[\frac12,1)$, the initial-boundary value problem \eqref{equ-multifrac} with $a\in L^2(\Omega)$ admits a unique mild solution $u\in C([0,T];L^2(\Omega))\cap C((0,T]; H^{2\gamma}(\Omega)\cap H_0^1(\Omega))$  such that
$$
\|u(t)\|_{H^{2\gamma}(\Omega)} \le Ct^{-\alpha_1\gamma}e^{CT} \|a\|_{L^2(\Omega)},\quad t\in(0,T].
$$
Moreover, $u:(0,T]\rightarrow H^{2\gamma}(\Omega)\cap H_0^1(\Omega)$ can be analytically extended to the sector $\{ z\in \mathbb C\setminus \{0\};|\arg z|<\frac{\pi}{2} \}$. 
\end{thm}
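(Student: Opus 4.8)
The plan is to recast the integral identity \eqref{equ-int-u} as a linear weakly singular Volterra fixed-point problem $u=\K u$, solve it by Banach's fixed-point theorem after passing to a sufficiently high iterate of $\K$, and then read off the a priori estimate and the analyticity from the same family of bounds. Write $(\K u)(t):=\sum_{j=1}^5 I_j(t)$ for the right-hand side of \eqref{equ-int-u} and look for $u$ in
$$
X:=\bigl\{u\in C([0,T];\LL)\cap C((0,T];D(A^\gamma)):\ \|u\|_X:=\sup_{0\le t\le T}\|u(t)\|_{\LL}+\sup_{0<t\le T}t^{\alpha_1\gamma}\|A^\gamma u(t)\|_{\LL}<\infty\bigr\}.
$$
Two preliminary observations absorb the $x$-dependence of the coefficients. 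First, multiplication by $q_j\in W^{2,\infty}(\Omega)$ is bounded on $\LL$ and on $D(A^\gamma)$ for every $\gamma\in[\tfrac12,1)$: on $D(A^{1/2})=H^1_0(\Omega)$ and on $D(A)=H^2(\Omega)\cap H^1_0(\Omega)$ this is elementary (Leibniz rule, and $q_ju$ vanishes on $\partial\Omega$ together with $u$), and the intermediate exponents follow by interpolation, using $D(A^\gamma)=[D(A^{1/2}),D(A)]_{2\gamma-1}$. Second, $\gamma\ge\tfrac12$ gives $\|B\cdot\nabla v+bv\|_{\LL}\le C\|v\|_{D(A^{1/2})}\le C\bigl(\|A^\gamma v\|_{\LL}+\|v\|_{\LL}\bigr)$.

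The heart of the argument is the termwise estimation of $\K u$, via Lemma~\ref{lem-ML}(i), the operator bounds \eqref{esti-S(t)}, and the Beta identity \eqref{equ-beta}, distributing the fractional power $A^\gamma$ differently in different terms. For $I_1=S(t)a$ one reads $\|A^\gamma S(t)a\|_{\LL}\le Ct^{-\alpha_1\gamma}\|a\|_{\LL}$ off the series using $1+x\ge x^\gamma$. In $I_2$ and $I_3$ all of $A^\gamma$ goes onto $S'$ via $A^\gamma A^{-1}S'=A^{\gamma-1}S'$ with $\|A^{\gamma-1}S'(z)\|\le C|z|^{\alpha_1-1-\alpha_1\gamma}$, so the time integrals $\int_0^t(t-r)^{\alpha_1-1-\alpha_1\gamma}r^{-\alpha_j}\d r$ and $\int_0^t(t-r)^{\alpha_1-1-\alpha_1\gamma}r^{-\alpha_1\gamma}\d r$ converge thanks to $\gamma<1$, $\alpha_j<1$, $\alpha_1\gamma<1$. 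For the double-integral terms $I_4,I_5$ the opposite split is used, $A^{\gamma-1}S^{(k)}\bigl((1-s)(t-r)\bigr)q_ju(r)=A^{-1}S^{(k)}\bigl((1-s)(t-r)\bigr)A^\gamma\bigl(q_ju(r)\bigr)$ with $\|A^{-1}S^{(k)}(z)\|\le C|z|^{\alpha_1-k}$: the powers $(t-r)^{1-\alpha_j}$, respectively $(t-r)^{-\alpha_j}$, present in \eqref{equ-int-u} keep the resulting kernel $(1-s)^{\alpha_1-1}s^{-\alpha_j}(t-r)^{\alpha_1-1-\alpha_j}$ integrable (its $s$-integral finite by \eqref{equ-beta}, its $r$-exponent exceeding $-1$ since $\alpha_j<\alpha_1$), which would fail if $A^\gamma$ were pushed onto $S^{(k)}$ here. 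Collecting the contributions one obtains, for $u,\tilde u\in X$, that $\|\K u-\K\tilde u\|_X\le C(T)\|u-\tilde u\|_X$ with $C(T)\downarrow0$ as $T\downarrow0$, together with $\|\K u\|_X\le C\|a\|_{\LL}+C(T)\|u\|_X$; in particular $\K$ maps $X$ into $X$.

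Existence and uniqueness then follow from the structure of $\K$ as an affine weakly singular Volterra map: iterating the kernel estimate and using \eqref{equ-beta} repeatedly gives $\|\K^m u-\K^m\tilde u\|_X\le\frac{C^m\Gamma(\kappa)^m}{\Gamma(m\kappa+1)}\|u-\tilde u\|_X\to0$, where $\kappa:=\min_{2\le j\le\ell}\{\alpha_1(1-\gamma),\,\alpha_1-\alpha_j\}>0$, so some iterate $\K^m$ is a contraction on $X$ for the given $T$ and $\K$ has a unique fixed point $u\in X$ (equivalently, one solves on a short interval and continues by linearity). Substituting $u$ back into \eqref{equ-int-u} and using the same bounds termwise yields $\|A^\gamma u(t)\|_{\LL}\le Ct^{-\alpha_1\gamma}\|a\|_{\LL}+C\int_0^t(t-r)^{\kappa-1}\|A^\gamma u(r)\|_{\LL}\d r$ (and the analogous bound for $\|u(t)\|_{\LL}$), so a generalized (fractional) Gronwall inequality gives $\|u(t)\|_{H^{2\gamma}(\Omega)}\le C\|u(t)\|_{D(A^\gamma)}\le Ct^{-\alpha_1\gamma}e^{CT}\|a\|_{\LL}$ for $0<t\le T$; continuity of $u$ into $\LL$ on $[0,T]$ with $u(0)=a$ and into $D(A^\gamma)$ on $(0,T]$ comes from dominated convergence applied to the same majorants, and $D(A^\gamma)\hookrightarrow H^{2\gamma}(\Omega)\cap H^1_0(\Omega)$ places $u$ in the asserted space.

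For the analyticity I would rerun the construction with $t$ replaced by $z$ in the sector $\Sigma:=\{z\in\C\setminus\{0\}:|\arg z|<\tfrac\pi2\}$. Since $\alpha_1<1$, for $z\in\Sigma$ one has $|\arg(-z^{\alpha_1})|\ge\pi-\tfrac{\alpha_1\pi}{2}>\tfrac{\alpha_1\pi}{2}$, so a single $\mu$ with $\tfrac{\alpha_1\pi}{2}<\mu<\min\{\pi,\alpha_1\pi\}$ and $\mu\le|\arg(-z^{\alpha_1})|$ works throughout $\Sigma$; hence Lemma~\ref{lem-ML}(i) and the bounds \eqref{esti-S(t)} persist on $\Sigma$, uniformly on each sub-sector $\{|\arg z|\le\tfrac\pi2-\delta\}$, and $S(z)a,S'(z)a,S''(z)a$ are $D(A^\gamma)$-valued holomorphic on $\Sigma$ (each summand entire in $z$, the series locally uniformly convergent). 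Interpreting the integrals in \eqref{equ-int-u} along the segment $r=\tau z$, $\tau\in(0,1)$, the same estimates make $\K$ act on holomorphic maps $u:\Sigma\to D(A^\gamma)$ with $\sup_{|\arg z|\le\pi/2-\delta}|z|^{\alpha_1\gamma}\|A^\gamma u(z)\|_{\LL}<\infty$ for each $\delta$; the Picard iterates $u_0:=I_1+I_2$, $u_{k+1}:=\K u_k$ stay holomorphic (differentiation under the integral sign / Morera, the weakly singular kernels being dominated uniformly) and converge locally uniformly on $\Sigma$ by the $\K^m$-contraction estimate, so the limit is holomorphic on $\Sigma$ and, by uniqueness, coincides with the mild solution on $(0,T]$. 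The step I expect to be the real obstacle is exactly the bookkeeping in the termwise estimation of $\K$: one must decide for each of $I_1,\dots,I_5$ whether to let $A^\gamma$ fall on $S^{(k)}$ (gaining a factor $|z|^{-\alpha_1\gamma}$, which needs $\gamma<1$) or on $q_ju(r)$ (invoking the $D(A^\gamma)$-multiplication bound and absorbing the powers $(t-r)^{1-\alpha_j}$, $(t-r)^{-\alpha_j}$ present in \eqref{equ-int-u}) so that every resulting $s$- and $r$-integral has exponent $>-1$ while the output still carries precisely the weight $t^{-\alpha_1\gamma}$; this is also where the hypothesis $q_j\in W^{2,\infty}(\Omega)$ enters. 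A secondary technical point is the uniform (on sub-sectors) justification of the contour parametrization and of the interchange of the eigenfunction series with the $r$-integral in the analytic continuation.
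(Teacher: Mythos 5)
Your proposal is correct and follows essentially the same route as the paper: both construct the solution by successive approximation on the integral equation \eqref{equ-int-u}, distribute $A^\gamma$ between $S^{(k)}$ and the multiplication by $q_j$ exactly as you describe, control the iterates through weakly singular Volterra kernels whose $n$-fold compositions decay like $1/\Gamma(n\kappa+1)$ via the Beta identity \eqref{equ-beta}, and obtain analyticity from locally uniform convergence of holomorphic iterates on sub-sectors of $\{|\arg z|<\tfrac{\pi}{2}\}$. The only cosmetic differences are that the paper sums the telescoping series $\sum_n\|u_{n+1}-u_n\|_{D(A^\gamma)}$ directly, with an explicit multinomial majorant and a Laplace-transform/residue computation on that majorant to produce the $e^{CT}$ factor, rather than phrasing the argument as a contraction of a high iterate $\K^m$ and invoking an abstract generalized Gronwall inequality.
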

Here and henceforth, $C>0$ denotes constants which are independent of $t$, $T$, $a$ and $u$, but may depend on $\gamma$, $\{\alpha_j\}_{j=1}^\ell$, $d$, $b$, $B$, $\Omega$, $\{q_j\}_{j=2}^\ell$ and the coefficients of $\mathcal A$.
\begin{rem}
In Beckers and Yamamoto \cite{BY13} a similar fractional diffusion equation is discussed for $B=0$ and a similar regularity is proved. However \cite{BY13} assumes an extra condition $\alpha_1+\alpha_\ell > 1$, and our main result needs not such an assumption. 
\end{rem}
\begin{rem}
This theorem only deals with the homogeneous diffusion equation with multiple time-fractional derivatives. For the non-homogeneous case, one can refer to Jiang, Li, Liu and Yamamoto \cite{JLLY17}.
\end{rem}

Theorem \ref{thm-homo} shows that the spatial regularity $H^{2\gamma}(\Omega)$ cannot achieve the maximal $H^2(\Omega)$-regularity, but yields the continuity of the solution with respect to time $t\in(0,T]$ for 
$\frac{1}{2} < \gamma < 1$. 
However, the next theorem demonstrates that 
the solution $u(t)$ can achieve the maximal spatial regularity, $u(t)\in H^2(\Omega)$ for almost all $t\in(0,T)$.
\begin{thm}[$H^2(\Omega)$-regularity]
\label{thm-H2}
Let $0<\alpha_\ell<\dots<\alpha_1<1$ and $T>0$ be given.  Assuming that $a\in L^2(\Omega)$, $q_j\in W^{2,\infty}(\Omega)$ $(j=2,\dots,\ell)$, $B\in (L^{\infty}(\Omega))^d$ and $b\in L^{\infty}(\Omega)$. Then the solution $u$ to the initial-boundary value problem \eqref{equ-multifrac} belongs to $L^p(0,T;H_0^1(\Omega)\cap H^2(\Omega))$ with $1\leq p<\min\{2,\tfrac{1}{\alpha_1}\}$. Moreover the following estimate
$$
\|u\|_{L^p(0,T;H^2(\Omega))} \le C_T\|a\|_{L^2(\Omega)}
$$
holds true.
\end{thm}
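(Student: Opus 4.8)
The plan is to apply the operator $A$ to the integral equation \eqref{equ-int-u} and to estimate the five resulting pieces $AI_1,\dots,AI_5$ in $L^p(0,T;\LL)$; since $D(A)=\HH\cap H_0^1(\Om)$ with equivalent norms, a bound $\|Au\|_{L^p(0,T;\LL)}\le C_T\|a\|_\LL$ is precisely the assertion. Because $A^{-1}S^{(k)}(z)$ ($k=0,1,2$) are functions of the self-adjoint operator $A$, the factor $A$ commutes through them and lands on the innermost factor, so e.g. $AI_3=-\int_0^t S'(t-r)\,(B\cdot\na u(r)+b\,u(r))\,\d r$, and similarly for $I_2,I_4,I_5$. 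Throughout I would use the regularity already established in Theorem \ref{thm-homo}: $u\in C((0,T];D(A^\ga))$ for every $\ga\in[\tfrac12,1)$ with $\|u(t)\|_{D(A^\ga)}\le C\,t^{-\al_1\ga}\e^{CT}\|a\|_\LL$; taking $\ga=\tfrac12$ and using $\int_0^T t^{-\al_1}\d t<\infty$ this gives \emph{unconditionally} $u\in L^2(0,T;H_0^1(\Om))$ with $\|u\|_{L^2(0,T;H_0^1(\Om))}\le C_T\|a\|_\LL$. For the leading term, $\la_n|E_{\al_1,1}(-\la_n t^{\al_1})|\le C t^{-\al_1}$ (from Lemma \ref{lem-ML}(i)) gives $\|AS(t)a\|_\LL\le C t^{-\al_1}\|a\|_\LL$, which lies in $L^p(0,T)$ exactly when $p\,\al_1<1$; this is the origin of the restriction $p<1/\al_1$.

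The terms $I_2$ and $I_3$ are the delicate ones, because there the factor multiplying the Mittag-Leffler operator is only $\LL$-valued in $x$, and then $\|S'(w)\|_{\LL\to\LL}\simeq|w|^{-1}$ is not integrable near $w=0$, so no pointwise-in-$t$ estimate can work. The key observation is that, after the substitution $w=\la_n s^{\al_1}$, the scalar convolution kernels lose all $\la_n$-dependence in $L^1$: $\int_0^\infty\la_n s^{\al_1-1}|E_{\al_1,\al_1}(-\la_n s^{\al_1})|\,\d s=\f{1}{\al_1}\int_0^\infty|E_{\al_1,\al_1}(-w)|\,\d w=:C_{\al_1}<\infty$, the last integral converging by the decay $E_{\al_1,\al_1}(-w)=O(w^{-2})$ as $w\to+\infty$ (sharper than Lemma \ref{lem-ML}(i)). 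Writing $AI_3$ in the eigenbasis as $[AI_3]_n=-(\la_n k_n)*(B\cdot\na u+b\,u)_n$ with $k_n(s)=s^{\al_1-1}E_{\al_1,\al_1}(-\la_n s^{\al_1})$, Young's convolution inequality gives $\|[AI_3]_n\|_{L^p(0,T)}\le C_{\al_1}\|(B\cdot\na u+b\,u)_n\|_{L^p(0,T)}$; then Minkowski's integral inequality together with Hölder in $t$ — both requiring $p\le2$ — yield $\|AI_3\|_{L^p(0,T;\LL)}\le C_T\|B\cdot\na u+b\,u\|_{L^2(0,T;\LL)}\le C_T\|u\|_{L^2(0,T;H_0^1(\Om))}\le C_T\|a\|_\LL$. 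The term $I_2$ is handled the same way: its source $\sum_{j\ge2}\Ga(1-\al_j)^{-1}r^{-\al_j}q_j a$ factors as a fixed power of $r$ — which lies in $L^p(0,T)$ because $p\,\al_j<1$ — times $q_j a$, so that $\sum_n\|[AI_2]_n\|_{L^p(0,T)}^2\le C_T\sum_{j}\|q_j a\|_\LL^2\le C_T\|a\|_\LL^2$.

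For $I_4$ and $I_5$ the factor multiplying the Mittag-Leffler operator is $q_j u(r)$, which by Theorem \ref{thm-homo}, together with the fact that multiplication by $q_j\in W^{2,\infty}(\Om)$ preserves $D(A^\sigma)$ for $0\le\sigma\le1$, belongs to $D(A^\sigma)$ for every $\sigma\in[\tfrac12,1)$. Here the uniform-in-$n$ trick fails — the corresponding kernel $L^1$-norm grows like $\la_n^{\al_j/\al_1}$ — and one instead pays for that growth with the smoothing of $A^\sigma$: by \eqref{esti-S(t)} (with a fractional power $A^{-\sigma}$ inserted in the same computation) one has $\|S^{(k)}(w)g\|_\LL\le C|w|^{\al_1\sigma-k}\|A^\sigma g\|_\LL$ for $0\le\sigma\le1$. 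Combining this with the powers $(t-r)^{1-\al_j}$, $(t-r)^{-\al_j}$, $(1-s)$, $s^{-\al_j}$ appearing in $I_4,I_5$ and choosing $\sigma\in[\tfrac12,1)$ with $\sigma>\al_j/\al_1$ — possible exactly because $\al_j<\al_1$ — makes the exponent of $(t-r)$ strictly larger than $-1$; the $s$-integrals then converge by the Beta identity \eqref{equ-beta}, and one last Beta-type convolution in $r$ (using $\|A^\sigma q_j u(r)\|_\LL\le C r^{-\al_1\sigma}\e^{CT}\|a\|_\LL$ and $\al_1\sigma<1$) collapses the estimate to $\|AI_4(t)\|_\LL+\|AI_5(t)\|_\LL\le C\,t^{-\al_2}\e^{CT}\|a\|_\LL$, which lies in $L^p(0,T)$ for $p<1/\al_2$, implied by $p<1/\al_1$. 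Summing the five contributions gives $\|Au\|_{L^p(0,T;\LL)}\le C_T\|a\|_\LL$ for $p<\min\{2,1/\al_1\}$, hence $u\in L^p(0,T;\HH\cap H_0^1(\Om))$.

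The main obstacle, I expect, is precisely this dichotomy together with the choice of the exponent $\sigma$: one has to realise that $I_2,I_3$ \emph{must} be treated by the $L^p_t$–convolution argument with the uniform kernel bound (a pointwise-in-$t$ estimate being impossible), whereas $I_4,I_5$ \emph{must} be treated by the smoothing estimate \eqref{esti-S(t)} with $\sigma$ strictly above $\al_j/\al_1$; checking that the admissible window $(\al_j/\al_1,1)$ for $\sigma$ remains nonempty for \emph{every} $j=2,\dots,\ell$ simultaneously is where the ordering $\al_\ell<\dots<\al_1$ is indispensable, and it is also what pins down the quantitative interplay behind $p<\min\{2,1/\al_1\}$. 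Beyond that, the bulk of the computation is the bookkeeping of the four coupled singularities in $I_4,I_5$ — at $s=0$, $s=1$, $r=0$ and $r=t$ — each of which becomes integrable only after \eqref{equ-beta} is combined with the Mittag-Leffler decay (the relevant index $\al_1-1$ for $S''$ being negative, so that Lemma \ref{lem-ML}(i) does not apply directly and one must lean on \eqref{esti-S(t)}), together with the routine verification that $q_j\in W^{2,\infty}(\Om)$ preserves the fractional power domains $D(A^\sigma)$, Dirichlet boundary condition included.
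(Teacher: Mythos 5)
Your proposal is correct and takes essentially the same route as the paper: apply $A$ to \eqref{equ-int-u}, bound $AI_1$ pointwise by $Ct^{-\al_1}\|a\|_\LL$ (the source of $p<1/\al_1$), handle $AI_3$ via Young's convolution inequality with a uniform-in-$n$ $L^1$ bound on the kernel $\la_n r^{\al_1-1}E_{\al_1,\al_1}(-\la_n r^{\al_1})$ together with $u\in L^2(0,T;H_0^1(\Om))$ (the source of $p<2$), and handle $AI_4,AI_5$ by inserting a fractional power $A^{1-\ep}$ with $\al_1(1-\ep)>\al_j$, which is exactly your condition $\sigma>\al_j/\al_1$. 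The only minor deviations are that you also treat $I_2$ by the convolution/Young argument whereas the paper splits that integral at $t/2$ and integrates by parts on $[0,t/2]$ to obtain the pointwise bound $C\sum_j t^{-\al_j}\|a\|_\LL$, and that the paper derives the uniform kernel bound from the exact identity $\int_0^T\la_n r^{\al_1-1}E_{\al_1,\al_1}(-\la_n r^{\al_1})\,\d r=1-E_{\al_1,1}(-\la_n T^{\al_1})$ (using positivity from complete monotonicity) rather than from the $O(w^{-2})$ tail of $E_{\al_1,\al_1}(-w)$; both variants are valid.
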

\begin{rem}
This is different from the case of parabolic equations whose solutions cannot be in $L^p(0,T;H^2(\Omega))$ for any $p\ge1$ provided that an initial 
value is in $L^2(\Omega)$.
\end{rem}

If we further assume that $B\equiv0$ and all the coefficients $q_j$ are non-negative, then we have the following long-time asymptotic behavior of the solution to the IBVP \eqref{equ-multifrac}.
\begin{thm}
\label{thm-asymp}
Let $\alpha_j\in(0,1)$ be constants such that $\alpha_\ell<\dots<\alpha_1$, and $\{q_j\}_{j=1}^\ell$ be in $W^{2,\infty}(\Omega)$ with $q_j\ge,\not\equiv0$ on $\overline\Omega$ $(j=2,...,\ell)$. We further assume that $B\equiv0$ and $b\in 
W^{1,\infty}(\Omega)$ with $b\le0$ in $\Omega$. 
Let $v$ be a unique solution to 
the initial-boundary value problem 
\begin{equation}
\label{equ-single}
\left\{\begin{alignedat}{2}
&q_\ell(x)\partial_t^{\alpha_\ell}v(x,t)
=-\mathcal A v(x,t)+b(x)v(x,t), &\quad& x\in\Omega,\ t>0,
\\
&v(x,0)=a(x),  &\quad&  x\in\Omega,
\\
&v(x,t)=0,  &\quad&  x\in\partial\Omega,\ t>0.
\end{alignedat}
\right.
\end{equation}
Then the solution $u$ to \eqref{equ-multifrac} has the same asymptotic behavior as $v$, in the sense that
$$
\|u(\cdot\,,t) - v(\cdot\,,t)\|_{H^2(\Omega)}
= O(t^{-\min\{2\alpha_\ell,\alpha_{\ell-1}\}})\|a\|_{L^2(\Omega)},\quad
\mbox{as } t\to\infty.
$$
\end{thm}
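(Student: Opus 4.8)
The plan is to pass to the Laplace transform in $t$ and analyze everything through resolvents. Set $L_0:=\mathcal A-b$, which under the hypotheses ($b\le0$, $b\in W^{1,\infty}(\Omega)$) is a positive self-adjoint operator on $L^2(\Omega)$ with $D(L_0)=H^2(\Omega)\cap H_0^1(\Omega)$, $L_0\ge\lambda_1 I$, and obeys the elliptic estimate $\|f\|_{H^2(\Omega)}\le C\|L_0 f\|_{L^2(\Omega)}$; put $P(x,s):=\sum_{j=1}^\ell q_j(x)s^{\alpha_j}$. Applying $\mathcal{L}[\partial_t^{\alpha}g](s)=s^{\alpha}\widehat g(s)-s^{\alpha-1}g(0)$ to \eqref{equ-multifrac} with $B\equiv0$ and to \eqref{equ-single}, and using $u(\cdot,0)=v(\cdot,0)=a$, one obtains
$$
\widehat u(s)=(L_0+P(s))^{-1}s^{-1}P(s)a,\qquad
\widehat v(s)=(L_0+q_\ell s^{\alpha_\ell})^{-1}q_\ell s^{\alpha_\ell-1}a.
$$
Because $q_j\ge0$, for $|\arg s|<\pi/(2\alpha_1)$ one has $\mathrm{Re}\,P(x,s)\ge0$ pointwise, and $\mathrm{Re}\,P(x,s)\ge c_0|s|^{\alpha_1}$ for $|s|\ge1$ thanks to $q_1\equiv1$; hence $L_0+P(s)$ and $L_0+q_\ell s^{\alpha_\ell}$ are boundedly invertible and $s$-analytic on a sector $\Sigma_\theta:=\{s\ne0:|\arg s|<\theta\}$ with fixed $\theta\in(\tfrac\pi2,\min\{\pi,\tfrac{\pi}{2\alpha_1}\})$, with $\|(L_0+P(s))^{-1}\|_{L^2\to L^2}\le\lambda_1^{-1}$ everywhere and $\le C|s|^{-\alpha_1}$ on $|s|\ge1$ (and likewise for $L_0+q_\ell s^{\alpha_\ell}$). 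After verifying that these resolvent formulas are the Laplace transforms of the mild solution of Theorem~\ref{thm-homo} and of $v$, one recovers $u,v$ by inverse Laplace transform and, using the analyticity on $\Sigma_\theta$ and the decay of the transforms, moves the Bromwich contour onto a Hankel-type contour $\gamma_r$ made of the rays $\arg s=\pm\theta$, $|s|\ge r$, closed by the arc $|s|=r$.

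Next I would peel off the leading profile. The Neumann expansion $(L_0+q_\ell s^{\alpha_\ell})^{-1}=L_0^{-1}-s^{\alpha_\ell}L_0^{-1}q_\ell L_0^{-1}+\cdots$ shows that $\widehat v(s)$ behaves like $s^{\alpha_\ell-1}L_0^{-1}q_\ell a$ as $s\to0$, so I set
$$
\Phi(t):=\frac{t^{-\alpha_\ell}}{\Gamma(1-\alpha_\ell)}\,L_0^{-1}q_\ell a,\qquad
\widehat\Phi(s)=s^{\alpha_\ell-1}L_0^{-1}q_\ell a\in D(L_0),
$$
noting $\|\Phi(t)\|_{H^2(\Omega)}\le Ct^{-\alpha_\ell}\|a\|_{L^2(\Omega)}$. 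The crux is the pair of small-$|s|$ bounds, for $s\in\Sigma_\theta$, $0<|s|\le1$,
$$
\|\widehat v(s)-\widehat\Phi(s)\|_{H^2(\Omega)}\le C|s|^{2\alpha_\ell-1}\|a\|_{L^2(\Omega)},\qquad
\|\widehat u(s)-\widehat v(s)\|_{H^2(\Omega)}\le C|s|^{\alpha_{\ell-1}-1}\|a\|_{L^2(\Omega)},
$$
which combine to $\|\widehat u(s)-\widehat\Phi(s)\|_{H^2(\Omega)}\le C|s|^{\min\{2\alpha_\ell,\alpha_{\ell-1}\}-1}\|a\|_{L^2(\Omega)}$. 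The first comes from the identity $\widehat v-\widehat\Phi=-s^{2\alpha_\ell-1}(L_0+q_\ell s^{\alpha_\ell})^{-1}q_\ell L_0^{-1}q_\ell a$ together with the uniform $L^2\to H^2$ elliptic estimate for $L_0+q_\ell s^{\alpha_\ell}$. For the second, decompose $P(s)=q_\ell s^{\alpha_\ell}+R(s)$ with $R(s)=\sum_{j=1}^{\ell-1}q_j(x)s^{\alpha_j}$; subtracting the two equations gives $(L_0+q_\ell s^{\alpha_\ell})(\widehat u-\widehat v)=R(s)(s^{-1}a-\widehat u)$, and since $\|R(s)\|_{L^\infty(\Omega)}\le C|s|^{\alpha_{\ell-1}}$ for $|s|\le1$ (as $\alpha_{\ell-1}=\min_{j<\ell}\alpha_j$) and $\|s^{-1}a-\widehat u\|_{L^2(\Omega)}=\|s^{-1}(I-(L_0+P(s))^{-1}P(s))a\|_{L^2(\Omega)}\le C|s|^{-1}\|a\|_{L^2(\Omega)}$, the same elliptic estimate closes it. For $|s|\ge1$ only crude polynomial-in-$|s|$ bounds are required (from $L_0(L_0+P(s))^{-1}=I-P(s)(L_0+P(s))^{-1}$ and the sectorial bounds), which suffice to make the ray contributions below exponentially small.

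The last step is the inversion. Taking $r=1/t$ and writing $u(t)-\Phi(t)=\frac1{2\pi i}\int_{\gamma_{1/t}}e^{st}(\widehat u(s)-\widehat\Phi(s))\,ds$, split $\gamma_{1/t}$ into the arc $|s|=1/t$ and the two rays: on the arc $|e^{st}|\le e$, the arclength is $O(1/t)$ and $\|\widehat u-\widehat\Phi\|_{H^2(\Omega)}\le C(1/t)^{\min\{2\alpha_\ell,\alpha_{\ell-1}\}-1}\|a\|_{L^2(\Omega)}$, so this part contributes $O(t^{-\min\{2\alpha_\ell,\alpha_{\ell-1}\}})\|a\|_{L^2(\Omega)}$; on $s=\rho e^{\pm i\theta}$ one has $|e^{st}|=e^{\rho t\cos\theta}$ with $\cos\theta<0$, so the range $1/t\le\rho\le1$ yields, after the substitution $\sigma=\rho t$, a factor $t^{-\min\{2\alpha_\ell,\alpha_{\ell-1}\}}\int_1^\infty e^{\sigma\cos\theta}\sigma^{\min\{2\alpha_\ell,\alpha_{\ell-1}\}-1}\,d\sigma=O(t^{-\min\{2\alpha_\ell,\alpha_{\ell-1}\}})$, while $\rho\ge1$ gives $O(e^{-ct})$. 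Hence $\|u(t)-\Phi(t)\|_{H^2(\Omega)}\le Ct^{-\min\{2\alpha_\ell,\alpha_{\ell-1}\}}\|a\|_{L^2(\Omega)}$, and the identical argument applied to $v$ (using the first small-$|s|$ bound) gives $\|v(t)-\Phi(t)\|_{H^2(\Omega)}\le Ct^{-2\alpha_\ell}\|a\|_{L^2(\Omega)}$; the triangle inequality then delivers the asserted estimate for $\|u(\cdot,t)-v(\cdot,t)\|_{H^2(\Omega)}$.

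I expect the main obstacle to be the \emph{uniformity in $s$} of the elliptic resolvent estimates—in particular the $L^2(\Omega)\to H^2(\Omega)$ bounds for $L_0+q_\ell s^{\alpha_\ell}$ and $L_0+P(s)$ over the whole sector $\Sigma_\theta$: since the coefficients $q_j$ depend on $x$, one cannot use the eigenfunction diagonalization available in the constant-coefficient literature, and these bounds must instead be extracted from coercivity (the sign conditions $q_j\ge0$ and $b\le0$, together with $q_1\equiv1$) and standard elliptic regularity, with all constants carefully tracked. A secondary technical point is the rigorous justification that the resolvent formulas represent the Laplace transforms of the mild solutions and that the Bromwich contour may legitimately be deformed onto $\gamma_{1/t}$.
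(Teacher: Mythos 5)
Your proposal is correct and follows essentially the same route as the paper's proof: Laplace transform to an $s$-parametrized elliptic problem, a resolvent bound uniform over the sector obtained from coercivity (the sign conditions $q_j\ge0$, $b\le0$ give $\mathrm{Re}\,Q(x;s)\ge 0$ there — this is exactly the content of the paper's Lemma 3.1), an elliptic $H^2$-estimate for the boundary value problem satisfied by $\widehat u-\widehat v$, and deformation of the Bromwich contour onto the rays $\arg s=\pm\theta$. The differences are only organizational: the paper estimates $\widehat u-\widehat v$ directly, absorbing the term $C|s|^{\alpha_\ell}\|\widehat u-\widehat v\|_{L^2(\Omega)}$ for small $|s|$, and reserves the comparison with the profile $t^{-\alpha_\ell}(\mathcal A-b)^{-1}(q_\ell a)/\Gamma(1-\alpha_\ell)$ for Corollary 2.7, whereas you keep $q_\ell s^{\alpha_\ell}$ inside the invertible operator and triangulate through that profile.
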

Theorem \ref{thm-asymp} shows that as $t \to \infty$,
the solution $u$ to the IBVP \eqref{equ-multifrac} tends to the solution $v$ to the IBVP \eqref{equ-single} with a single time-fractional derivative. The assumption $b\leq0$ and $q_j\ge,\not\equiv0$ on $\overline\Omega$ are necessary for proving that the Laplace transform $\widehat{u}(x,s)$ of the solution $u$ to our problem \eqref{equ-multifrac} has no poles in the main sheet of Riemann surface cutting off the negative axis, which is essential for the proof of Theorem \ref{thm-asymp}. In the case of negative coefficients $\{q_j\}$, a counterexample can be found in \cite{LiLiuYa14}. Moreover, from this theorem, we can see that the decay rate of $u$ is $t^{-\alpha_\ell}$ which is the best possible one. More precisely, we have the following statement:
\begin{coro}
\label{cor-asymp}
Under the same assumptions in Theorem \ref{thm-asymp}, we have the following estimate
$$
\left\|u(\,\cdot\,,t) 
- \frac{(\mathcal A-b)^{-1}(q_\ell a) t^{-\alpha_\ell}}{\Gamma(1-\alpha_\ell)} \right\|_{H^2(\Omega)}
\leq C\|a\|_{L^2(\Omega)} t^{-\min\{2\alpha_\ell, \alpha_{\ell-1}\}},
$$
for sufficiently large $t>0$, where $(\mathcal A-b)^{-1}(q_\ell a)$ denotes the unique solution of $(\mathcal A-b)w = q_\ell a$ for $w\in H^2(\Omega)\cap H_0^1(\Omega)$, and the constant $C>0$ is independent of $t$, $a$ and $u$, but may depend on $d$, $\Omega$, $\{\alpha_j\}_{j=1}^\ell$, $\{q_j\}_{j=1}^\ell$, $b$ and $\{a_{ij}\}_{i,j=1}^d$.

Moreover, suppose that $\|u(\cdot\,,t)\|_{H^2(\Omega)}=o(t^{-\alpha_\ell})$ as $t\to\infty$, then $u(x,t)=0$ for all $x\in\Omega$ and $t>0$.
\end{coro}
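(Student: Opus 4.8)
\noindent The plan is to deduce both assertions from Theorem~\ref{thm-asymp} together with the leading-order asymptotics of the single-term solution $v$ of \eqref{equ-single}. Writing $c:=(\mathcal A-b)^{-1}(q_\ell a)\in H^2(\Omega)\cap H_0^1(\Omega)$, the triangle inequality gives
\[
\left\|u(\,\cdot\,,t)-\tfrac{t^{-\alpha_\ell}}{\Gamma(1-\alpha_\ell)}\,c\right\|_{H^2(\Omega)}
\le\|u(\,\cdot\,,t)-v(\,\cdot\,,t)\|_{H^2(\Omega)}
+\left\|v(\,\cdot\,,t)-\tfrac{t^{-\alpha_\ell}}{\Gamma(1-\alpha_\ell)}\,c\right\|_{H^2(\Omega)}.
\]
By Theorem~\ref{thm-asymp} the first term on the right is $O(t^{-\min\{2\alpha_\ell,\alpha_{\ell-1}\}})\|a\|_{L^2(\Omega)}$, so it suffices to prove that the second term is $O(t^{-2\alpha_\ell})\|a\|_{L^2(\Omega)}$; since $\min\{2\alpha_\ell,\alpha_{\ell-1}\}\le2\alpha_\ell$, this will give the first claim.

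To estimate $v$, I would Laplace-transform \eqref{equ-single}. From $\mathcal L\{\partial_t^{\alpha_\ell}v\}(s)=s^{\alpha_\ell}\widehat v(s)-s^{\alpha_\ell-1}a$ one obtains $(s^{\alpha_\ell}q_\ell+\mathcal A-b)\widehat v(s)=s^{\alpha_\ell-1}q_\ell a$, i.e.\ $\widehat v(s)=s^{\alpha_\ell-1}(s^{\alpha_\ell}q_\ell+\mathcal A-b)^{-1}(q_\ell a)$, whereas $\tfrac{t^{-\alpha_\ell}}{\Gamma(1-\alpha_\ell)}c$ has Laplace transform $s^{\alpha_\ell-1}(\mathcal A-b)^{-1}(q_\ell a)$. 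The resolvent identity then yields
\[
\widehat v(s)-s^{\alpha_\ell-1}(\mathcal A-b)^{-1}(q_\ell a)
=-\,s^{2\alpha_\ell-1}(s^{\alpha_\ell}q_\ell+\mathcal A-b)^{-1}q_\ell(\mathcal A-b)^{-1}(q_\ell a),
\]
and I would invert this along the same Hankel-type contour around $(-\infty,0]$ as in Section~\ref{sec-asymp}; the deformation is licensed by the pole-freeness of $(s^{\alpha_\ell}q_\ell+\mathcal A-b)^{-1}$ in $\mathbb C\setminus(-\infty,0]$ proved there (which is precisely where $q_\ell\ge0$ and $b\le0$ are used). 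The key points are that $q_\ell(\mathcal A-b)^{-1}(q_\ell a)\in H^2(\Omega)\cap H_0^1(\Omega)$ with norm $\le C\|a\|_{L^2(\Omega)}$ (as $(\mathcal A-b)^{-1}$ maps $L^2(\Omega)$ into $H^2(\Omega)\cap H_0^1(\Omega)$ and $q_\ell\in W^{2,\infty}(\Omega)$), and that $(s^{\alpha_\ell}q_\ell+\mathcal A-b)^{-1}$ is bounded from $L^2(\Omega)$ into $H^2(\Omega)$ uniformly as $s\to0$ (by elliptic regularity, since it converges to $(\mathcal A-b)^{-1}$); hence the single extra power $s^{\alpha_\ell}$ relative to the leading term $s^{\alpha_\ell-1}$ converts the $t^{-\alpha_\ell}$ rate into $t^{-2\alpha_\ell}$, as required.

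For the final statement, multiply the estimate just obtained by $t^{\alpha_\ell}$. Since $\min\{2\alpha_\ell,\alpha_{\ell-1}\}>\alpha_\ell$ (recall $\alpha_\ell<\alpha_{\ell-1}$), this shows $t^{\alpha_\ell}u(\,\cdot\,,t)\to\tfrac{1}{\Gamma(1-\alpha_\ell)}c$ in $H^2(\Omega)$ as $t\to\infty$. If in addition $\|u(\,\cdot\,,t)\|_{H^2(\Omega)}=o(t^{-\alpha_\ell})$, the left-hand side tends to $0$, which forces $c=(\mathcal A-b)^{-1}(q_\ell a)=0$; applying $\mathcal A-b$, which is injective on $H^2(\Omega)\cap H_0^1(\Omega)$, we get $q_\ell a=0$ in $\Omega$. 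Substituting this into the formula for $\widehat v$ gives $\widehat v\equiv0$, hence $v\equiv0$, so $a=v(\,\cdot\,,0)=0$, and then the uniqueness part of Theorem~\ref{thm-homo} yields $u\equiv0$.

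The step I expect to be most delicate is the contour estimate for the correction term of $v$: it amounts to re-running the resolvent analysis of Section~\ref{sec-asymp} and checking that the weighted resolvent $(s^{\alpha_\ell}q_\ell+\mathcal A-b)^{-1}$ is controlled near $s=0$ and along the deformed contour in the $H^2(\Omega)$-norm. A secondary point needing care is the implication $q_\ell a=0\Rightarrow a=0$ in the last step, which rests on the uniqueness of the (possibly degenerate) problem \eqref{equ-single}; if $q_\ell$ is merely nonnegative rather than bounded below, one should invoke that uniqueness directly, or strengthen the hypothesis to $q_\ell>0$ a.e.\ in $\Omega$.
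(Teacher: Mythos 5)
Your proof of the main estimate follows essentially the same route as the paper: both compare $u$ with $v$ via Theorem \ref{thm-asymp}, identify the Laplace transform of $u_\ell:=\Gamma(1-\alpha_\ell)^{-1}t^{-\alpha_\ell}(\mathcal A-b)^{-1}(q_\ell a)$ as $s^{\alpha_\ell-1}(\mathcal A-b)^{-1}(q_\ell a)$, establish $\|\widehat v(\cdot\,;s)-\widehat{u_\ell}(\cdot\,;s)\|_{H^2(\Omega)}\le C|s|^{2\alpha_\ell-1}\|a\|_{L^2(\Omega)}$, and invert along the contour $\gamma(0,\theta)$ to obtain the $t^{-2\alpha_\ell}$ rate for $v-u_\ell$. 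The only cosmetic difference is that the paper writes the elliptic boundary value problem $(\mathcal A-b)(\widehat v-\widehat{u_\ell})=-q_\ell s^{\alpha_\ell}\widehat v$ and applies elliptic regularity together with \eqref{esti-lap-v}, whereas you phrase the same computation as a resolvent identity plus uniform $L^2(\Omega)\to H^2(\Omega)$ boundedness of $(s^{\alpha_\ell}q_\ell+\mathcal A-b)^{-1}$ near $s=0$; on the part of the contour with $|s|\ge\delta_0$ any polynomial growth of your resolvent bound is harmless because of the factor $e^{rt\cos\theta}$ with $\cos\theta<0$, so this variant is sound.

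For the ``moreover'' assertion the paper in fact gives no argument, so your sketch is an addition rather than a reproduction. The deduction $t^{\alpha_\ell}u(\cdot\,,t)\to\Gamma(1-\alpha_\ell)^{-1}(\mathcal A-b)^{-1}(q_\ell a)$ in $H^2(\Omega)$, hence $(\mathcal A-b)^{-1}(q_\ell a)=0$ and $q_\ell a=0$, is correct. The caveat you flag at the end is a genuine issue and not merely a technicality: the hypotheses of Theorem \ref{thm-asymp} only require $q_\ell\ge0$, $q_\ell\not\equiv0$, so $q_\ell a=0$ does not force $a=0$ (take $a$ supported in an open set where $q_\ell$ vanishes); in that case the first estimate of the corollary already yields $\|u(\cdot\,,t)\|_{H^2(\Omega)}=O(t^{-\min\{2\alpha_\ell,\alpha_{\ell-1}\}})=o(t^{-\alpha_\ell})$ while $u\not\equiv0$, so the ``moreover'' statement actually requires $q_\ell>0$ a.e.\ (or an iteration of the asymptotic expansion through the lower orders). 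Likewise your step $v\equiv0\Rightarrow a=0$ relies on continuity of $v$ at $t=0$ for the possibly degenerate problem \eqref{equ-single}, which the paper does not make precise. In short: your argument is as complete as the statement permits, and the residual gap you identified lies in the corollary itself.
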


From the above theorems, it turns out that the fractional orders are very related to the asymptotic behavior of the solutions of the time-fractional diffusion equations. Regarding the practical importance and theoretical interests, we propose the following inverse problem
\begin{prob}
\label{prob-IP}
Let $T>0$ be arbitrarily given and $x_0\in\Om$ be any fixed point. Assume that the initial value $a\in \LL$ with $a\not\equiv0$ of fixed sign $($i.e. $a\ge,\not\equiv0$ or $a\le,\not\equiv0$$)$. We let $u$ satisfy the IBVP \eqref{equ-multifrac}. Determine $\al_j$ $(0\le j\le\ell)$ by the one interior point observation $u(x_0,t)$, $t\in(0,T)$.
\end{prob}
We can refer to papers on inverse problems of the determination of the fractional orders in the time-fractional diffusion models. For the case of a single time-fractional derivative, Cheng, Nakagawa, Yamamoto and Yamazaki \cite{C09} proved the uniqueness for determining the fractional order and the diffusion coefficient by one endpoint measurement in the one-dimensional case, and on the basis of asymptotic behavior of the solution, Hatano, Nakagawa, Wang and Yamamoto \cite{H13} established a formula of reconstructing the order of fractional derivative in time in the fractional diffusion equation by time history at one fixed interior point. The uniqueness for the recovery of the fractional orders was proved for the multi-term case in Li and Yamamoto \cite{LY14}. See Kian, Oksanen, Soccorsi and Yamamoto \cite{KOSY18}, and Li, Imanuvilov and Yamamoto \cite{LIY15} on Dirichlet-to-Neumann map for fractional equation. Kian, Soccorsi and Yamamoto \cite{KSY17} investigated diffusion equations with time-fractional derivatives of space-dependent variable order and proved that the space-dependent variable order coefficient is uniquely determined by the knowledge of a suitable time-sequence of partial Dirichlet-to-Neumann maps. We refer to Li, Luchko and Yamamoto \cite{LLY16} for the inverse problem in the determination of the weight function in the diffusion model with distributed order time-derivatives. We refer to Li, Zhang, Jia and Yamamoto \cite{L13} for the numerical treatment, and Jin and Rundell \cite{J15} for a topical review and a comprehensive list of bibliographies. 

To the authors' knowledge, it reveals that the existing papers treat the inverse problems from single measurement in the case where all the coefficients $q_j$ in \eqref{equ-multifrac} are constants. Keeping the above points in mind, we are interested in the inverse problem of the determination of the fractional order in the fractional model \eqref{equ-multifrac}. We have
\begin{thm}[Uniqueness]
\label{thm-ip-multi}
Under the same assumptions in Theorem \ref{thm-asymp}, we further assume that $1\le d\le3$ and $a\not\equiv0$ is of fixed sign in $\Omega$. Moreover, we suppose that $u,\wt u\in C([0,T];\LL)\cap C((0,T]; \HH\cap H_0^1(\Omega))$ solve the IBVP \eqref{equ-multifrac} with respect to the fractional orders $0<\alpha_\ell<\dots<\alpha_1<1$ and $0<\wt\alpha_\ell<\dots<\wt\alpha_1<1$ separately. Then $\al_j=\wt\al_j, j=1,\dots,\ell$, if $u(x_0,\cdot)=\wt u(x_0,\cdot)$ in $(0,T)$.
\end{thm}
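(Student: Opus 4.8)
The plan is to pass from the single-point identity $u(x_0,\cdot)=\wt u(x_0,\cdot)$ on $(0,T)$ to equality of the two solutions at $x_0$ for all $t>0$, then to compare the Laplace transforms and read the fractional orders off the behaviour of the transforms as $s\to0$. Since $1\le d\le3$, we may fix $\gamma\in[\tfrac12,1)$ with $2\gamma>d/2$, so that $H^{2\gamma}(\Om)\cap H_0^1(\Om)\hookrightarrow C(\ov\Om)$ and evaluation at the fixed point $x_0\in\Om$ is a bounded linear functional on this space. Applying Theorem~\ref{thm-homo} with an arbitrarily large $T$, both $u$ and $\wt u$ (as maps into $H^{2\gamma}(\Om)\cap H_0^1(\Om)$) extend analytically from $(0,\infty)$ to the sector $\{z\in\C\setminus\{0\}:|\arg z|<\pi/2\}$; composing with evaluation at $x_0$, the scalar functions $t\mapsto u(x_0,t)$ and $t\mapsto\wt u(x_0,t)$ are real-analytic on $(0,\infty)$, and since they agree on $(0,T)$ the identity theorem gives $u(x_0,t)=\wt u(x_0,t)$ for all $t>0$.

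Next I take Laplace transforms. By Corollary~\ref{cor-asymp} one has $\|u(\cdot,t)\|_{\HH}=O(t^{-\al_\ell})$ as $t\to\infty$, while Theorem~\ref{thm-homo} gives $\|u(\cdot,t)\|_{H^{2\gamma}(\Om)}=O(t^{-\al_1\gamma})$ near $t=0$ with $\al_1\gamma<1$; hence $\wh u(x_0,s)=\int_0^\infty\e^{-st}u(x_0,t)\,\d t$ converges absolutely for $\Re s>0$, and likewise for $\wt u$. Taking the Laplace transform of \eqref{equ-multifrac} (in which $B\equiv0$), using $\int_0^\infty\e^{-st}\pa_t^{\al_j}u(\cdot,t)\,\d t=s^{\al_j}\wh u(\cdot,s)-s^{\al_j-1}a$ and $q_1\equiv1$, gives
\[
\wh u(\cdot,s)=\frac1s\Big(\mathcal A-b+\sum_{j=1}^\ell q_js^{\al_j}\Big)^{-1}\Big(\sum_{j=1}^\ell q_js^{\al_j}a\Big);
\]
this is legitimate because, for $\Re s>0$, one has $\Re s^{\al_j}>0$ together with $b\le0$ and $q_j\ge0$, so the symmetric operator $\mathcal A-b+\sum_jq_js^{\al_j}$ has strictly positive real part and is boundedly invertible — the same mechanism behind the absence of poles exploited in Section~\ref{sec-asymp}. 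An analogous formula holds for $\wh{\wt u}$ with $\wt\al_j$ replacing $\al_j$. Evaluating at $x_0$ (valid by elliptic regularity and $\HH\hookrightarrow C(\ov\Om)$ for $d\le3$) and using the first step, $\wh u(x_0,s)=\wh{\wt u}(x_0,s)$ for $\Re s>0$, in particular as $s\downarrow0$.

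Now I analyse this common function as $s\to0^+$. As $b\le0$, the operator $\mathcal B:=\mathcal A-b$ is boundedly invertible on $\LL$; writing $G(s):=\sum_jq_js^{\al_j}$ (a multiplication operator with $\|G(s)\|\to0$), the Neumann series $(\mathcal B+G(s))^{-1}=\mathcal B^{-1}\sum_{k\ge0}(-G(s)\mathcal B^{-1})^k$ converges for small $|s|$ and produces a convergent generalized power series
\[
\wh u(\cdot,s)=\sum_{\be\in\Lambda}c_\be\,s^{\be-1},\qquad c_\be\in\HH\cap H_0^1(\Om),\qquad c_{\al_\ell}=\mathcal B^{-1}(q_\ell a),
\]
where $\Lambda=\{\al_{j_1}+\dots+\al_{j_m}:m\ge1,\ 1\le j_i\le\ell\}$ is the additive semigroup generated by $\al_1,\dots,\al_\ell$ (a discrete set), and the coefficient of the lowest power $s^{\al_\ell-1}$ is exactly $\mathcal B^{-1}(q_\ell a)$ because $\al_\ell$ is not a sum of two or more of the $\al_j$; the analogous expansion holds for $\wh{\wt u}(x_0,\cdot)$ in powers $s^{\wt\be-1}$, $\wt\be\in\wt\Lambda$. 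Since the two functions coincide near $s=0^+$, their (uniquely determined) expansions coincide coefficient by coefficient. As $q_\ell a$ has fixed sign and $q_\ell a\not\equiv0$, the strong maximum principle for $\mathcal B=\mathcal A-b$ with $b\le0$ makes $\mathcal B^{-1}(q_\ell a)$ of strict fixed sign in $\Om$, so $c_{\al_\ell}(x_0)\ne0$; matching the leading exponents gives $\al_\ell=\wt\al_\ell$ (which also follows directly from Corollary~\ref{cor-asymp} evaluated at $x_0$). One then argues by downward induction on $k$: assuming $\al_j=\wt\al_j$ for all $j>k$, any product $\mathcal B^{-1}q_{j_1}\cdots\mathcal B^{-1}q_{j_m}a$ with $m\ge2$ contributing to the power $s^{\al_k-1}$ must satisfy $\sum_i\al_{j_i}=\al_k$, hence uses only indices $j_i>k$, hence contributes identically to the $s^{\al_k-1}$-coefficients of $\wh u(x_0,\cdot)$ and $\wh{\wt u}(x_0,\cdot)$; the single-factor term $\mathcal B^{-1}(q_ka)$ contributes to $\wh u(x_0,\cdot)$ but, if $\al_k\ne\wt\al_k$ (say $\al_k<\wt\al_k$ after relabelling), not to $\wh{\wt u}(x_0,\cdot)$, so equating coefficients forces $[\mathcal B^{-1}(q_ka)](x_0)=0$ — impossible by the maximum principle since $q_ka$ has fixed sign with $q_ka\not\equiv0$ (for $k=1$ this holds because $q_1\equiv1$). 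Hence $\al_k=\wt\al_k$ for all $k$, completing the proof.

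The technical core is the third step: one must upgrade the two-term estimate of Corollary~\ref{cor-asymp} into a full asymptotic expansion with controlled remainders, so that the term-by-term matching of coefficients is rigorous, and one must deal with the ``resonances'' — indices $k$ for which $\al_k$ equals a sum of the already-determined smaller orders, and the degenerate case $q_ka\equiv0$ — by detecting $\al_k$ through a suitably chosen power of $s$ (for example $s^{\al_k+\al_\ell-1}$, whose coefficient equals $-\mathcal B^{-1}(q_k\mathcal B^{-1}(q_\ell a))$ up to already-matched terms, again nonzero at $x_0$ by the strong maximum principle). It is precisely in these nonvanishing arguments — and in the positivity of $\mathcal B^{-1}$ — that the sign hypotheses $b\le0$, $q_j\ge,\not\equiv0$, $a$ of fixed sign are used, together with the dimension restriction $1\le d\le3$ which makes pointwise evaluation legitimate.
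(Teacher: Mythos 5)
Your proposal is correct in strategy and coincides with the paper through its first two stages: both arguments use the analyticity from Theorem \ref{thm-homo} to extend the solutions, and hence the identity $u(x_0,\cdot)=\wt u(x_0,\cdot)$, from $(0,T)$ to $(0,\infty)$; both then pass to Laplace transforms, reduce to elliptic problems with spectral parameter $s$, and read off the orders from the behaviour as $s\to0^+$, with the strong maximum principle supplying the nonvanishing at $x_0$ (this is exactly where $b\le0$, $q_j\ge0$, the fixed sign of $a$ and $1\le d\le 3$ enter in both). Where you genuinely diverge is the last step. The paper argues by contradiction at the largest index $j_0$ with $\al_{j_0}\ne\wt\al_{j_0}$: it sets $v=\wh u-\wh{\wt u}$, normalizes $w(s)=sv(s)/\sum_j|s^{\al_j}-s^{\wt\al_j}|$, and proves $w(s)\to w_0$ in $H^2(\Om)$ where $(\mathcal A-b)w_0=aq_{j_0}$, so that $w(x_0;s)\not\to 0$ while $v(x_0;s)\equiv0$ --- a single limit computation that never requires a full expansion of $\wh u$ and therefore never encounters the ``resonance'' issue you flag. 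You instead expand the resolvent in a Neumann series, obtain a generalized power series with exponents in the additive semigroup generated by the $\al_j$, and match coefficients by downward induction; this costs you the (true but unstated) lemma that coefficients of such a series with discrete exponent set are uniquely determined, plus the multi-factor bookkeeping, and buys a more explicit picture of which coefficient detects which order. In fact your own observation that resonant multi-factor terms at level $s^{\al_k-1}$ involve only indices $j>k$ and hence cancel in the difference already disposes of the resonance problem, so your closing paragraph overstates the remaining difficulty. One caveat applies to both proofs equally: the maximum-principle step needs $q_ja\not\equiv0$ for the index being detected ($j_0$ in the paper, $k$ in yours), and neither argument fully treats the degenerate case $q_ja\equiv0$ for $j\ge2$; your suggested repair via the two-factor coefficient $-(\mathcal A-b)^{-1}\bigl(q_k(\mathcal A-b)^{-1}(q_\ell a)\bigr)$ is a reasonable way to close that gap, which the paper leaves untouched.
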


\section{Forward problem}
\label{sec-forward}
In this section, we mainly investigate the well-posedness for the IBVP \eqref{equ-multifrac} which will be divided into two subsections. In Section \ref{sec-wellposed}, we will finish the proof of Theorem \ref{thm-homo}, that is, the 
well-posedness of the problem \eqref{equ-multifrac} and the analyticity of the solution. In Section \ref{sec-asymp}, by Section \ref{sec-wellposed} and the Laplace transform argument, the long-time asymptotic behavior in Theorem \ref{thm-asymp} is easily proved.

\subsection{Unique existence and analyticity of the mild solution}
\label{sec-wellposed}
Since the mild solution to the IBVP \eqref{equ-multifrac} satisfies the integral equation \eqref{equ-int-u}, after the change of variables, we find
\begin{align}
\label{equ-int-u-standard}
u(t) 
=&S(t)a
  -\sum_{j=2}^\ell\frac{t^{1-\alpha_j}}{\Gamma(1-\alpha_j)}
      \int_0^1 A^{-1}S'(rt) (1-r)^{-\alpha_j}q_ja \d r
  -t\int_0^1 A^{-1}S'(rt) \widetilde u\big((1-r)t\big) \d r
\nonumber\\
&+\sum_{j=2}^\ell \frac{(1-\alpha_j)t^{1-\alpha_j}}{\Gamma(1-\alpha_j)}
       \int_0^1 \int_0^1 A^{-1}S'\big((1-s)rt\big) r^{-\alpha_j} s^{-\alpha_j} q_ju\big((1-r)t\big) \d s \d r
\nonumber\\
&+ \sum_{j=2}^\ell \frac{t^{2-\alpha_j}}{\Gamma(1-\alpha_j)}
        \int_0^1 \int_0^1 A^{-1}S''\big((1-s)rt\big) (1-s)r^{1-\alpha_j} s^{-\alpha_j} 
         q_ju\big((1-r)t\big) \d s \d r,
\end{align}
where $\widetilde u :=B\cdot\nabla u+bu$. Moreover, we extend the variable $t$ in \eqref{equ-int-u-standard} from $(0,T)$ to the sector $\{z\in \mathbb C\setminus\{0\}; |\arg z|<\frac{\pi}{2}\}$, and setting $u_0=0$, we define $u_{n+1}(z)$ $(n=0,1,\dots)$ as follows:
\begin{align} 
\label{def-u_n} 
u_{n+1}(z)
=&S(z)a
  -\sum_{j=2}^\ell\f{z^{1-\al_j}}{\Ga(1-\al_j)}\int_0^1 A^{-1}S'(rz)(1-r)^{-\al_j}q_ja\d r
\nonumber\\
 -&z\int_0^1 A^{-1}S'(rz) (B\cdot\na u_n + bu_n)\big((1-r)z\big)\d r
\nonumber\\
+&\sum_{j=2}^\ell \f{(1-\al_j)z^{1-\al_j}}{\Ga(1-\al_j)}
       \int_0^1 \int_0^1 A^{-1}S'\big((1-s)rz\big) r^{-\al_j} s^{-\al_j} q_ju_n\big((1-r)z\big)\d s\d r
\nonumber\\
+ &\sum_{j=2}^\ell \f{z^{2-\al_j}}{\Ga(1-\al_j)}
        \int_0^1 \int_0^1 A^{-1}S''\big((1-s)rz\big) (1-s)r^{1-\al_j} s^{-\al_j} 
         q_ju_n\big((1-r)z\big)\d s\d r.
\end{align}
We conclude from the definition \eqref{def-S(t)} of $S(z)$ and the properties of Mittag-Leffler function that $u_n(z)$ defined in \eqref{def-u_n} uniformly converges to the solution to the initial-boundary value problem \eqref{equ-multifrac} as $n\to\infty$ for any compact subset of the section $\{z\in\mathbb C\setminus\{0\};\, |\arg z|<\frac{\pi}{2}\}$. The details are listed as follows.
\begin{proof}[\bf Proof of Theorem \ref{thm-homo} ]
For any $n\in\N$, taking the operator $A^\ga$ on both sides of \eqref{def-u_n}, from \eqref{esti-S(t)} for the $z\in S_{\te,T}:=\{z\in\mathbb C\setminus\{0\};\, |\arg z|<\te, |z|\le T\}$  with $\te\in(0,\f{\pi}{2})$, we claim that the following estimate holds:
\begin{align}
\label{esti-u_n}
\|u_{n+1}(z)-u_n(z)\|_{D(A^\ga)}
\leq M_1M^n \left( \sum_{j=1}^\ell J^{\be_j} \right)^n \big(g\big)(|z|)\|a\|_\LL,
\quad n\in\N,
\end{align}
where $g(t):=t^{-\al_1\ga}$, $\be_1:=\al_1-\al_1\ga$, $\be_j:=\al_1-\al_j$, $j=2,\dots,\ell$, the constant $M$ is independent of $T$, $t>0$, $z\in S_{\te,T}$, but may dependent on $\ga$, $d$, $\Om$, $\te$, $p$, $p_1, ..., p_\ell$, $\al_1, ..., \al_\ell$, and by $J^\al$ we denote the Riemann-Liouville fractional integral
$$
(J^\al f)(t):=\f{1}{\Ga(\al)}\int_0^t (t-\tau)^{\al-1} f(\tau) \d\tau,\quad \al>0,
$$
and denote $J^0f(t) = f(t)$. We now proceed by induction on $n$ to prove the inequality \eqref{esti-u_n}.  Firstly, for $n=0$, using the estimate \eqref{esti-S(t)}, for $z\in S_{\te,T}$, it follows that
\begin{align*}
\|u_{1}(z) - u_0(z)\|_{D(A^\ga)}
\le& \sum_{j=2}^\ell 
    \left\|
    \f{-z^{1-\al_j}}{\Ga(1-\al_j)}\int_0^1 A^{\ga-1}S'(rz)(1-r)^{-\al_j}q_ja\d r 
    \right\|_\LL +\|A^\ga S(z)a\|_\LL
\\
\le& C\left(\sum_{j=2}^\ell |z|^{\al_1-\al_j-\al_1\ga}
        \int_0^1 r^{\al_1-1-\al_1\ga}(1-r)^{-\al_j}\d r
     +|z|^{-\al_1\ga} \right)\|a\|_\LL.
\end{align*}
Since $\ga\in[\f 1 2,1)$, and noting that $|z|^{\al_1-\al_j-\al_1\ga}\leq T^{\al_1-\al_j}|z|^{-\al_1\ga}$ ($j=1,\dots,\ell$), we see that
$$
\|u_{1}(z) - u_0(z)\|_{D(A^\ga)}
\le C\sum_{j=1}^\ell T^{\al_1-\al_j}|z|^{-\al_1\ga}\|a\|_\LL
=:M_1 |z|^{-\al_1\ga}\|a\|_\LL.
$$
Next, for any $n\in\N$, in view of the inequalities  $\| B\cdot\na v\|_\LL \le C\| v\|_{D(A^{\f 1 2})} \le C\|v\|_{D(A^\ga)}$  for $v \in D(A^\ga)$ and $\ga\in[\f 1 2,1)$, we derive
\begin{align*}
 &\| A^{-1}S'(r z)\big( B\cdot\na (u_{n+1}-u_n) + b(u_{n+1}-u_n)\big)\big((1-r)z\big)\|_{D(A^\ga)}
 \\
\le& C\| A^{\ga-1}S'(r z)\|_{\LL\to \LL}
 \|(u_n-u_{n-1})((1-r)z)\|_{D(A^\ga)}.
\end{align*}
Combining the above inequalities with \eqref{esti-S(t)} for $z\in S_{\te,T}$, we can prove that
\begin{align*}
&\|u_{n+1}(z)- u_n(z)\|_{D(A^\ga)}
\\
\le& C|z|^{\be_1}
      \int_0^1 (1-r)^{\be_1-1}
        \|u_n(rz)-u_{n-1}(rz)\|_{D(A^{\gamma})}\d r
\\
    &+C\sum_{j=2}^\ell|z|^{\be_j}
      \left(\int_0^1(1-s)^{\be_1-1} s^{-\al_j}\d s \right)
        \int_0^1 (1-r)^{\be_j-1}  \|u_n(rz)-u_{n-1}(rz)\|_{D(A^\ga)}\d r,
\end{align*}
where $\be_1:=\al_1-\al_1\ga$, $\be_j:=\al_1-\al_j$, $j=2,\dots,\ell$. Noting that, for $0 < \al_j < \al_1<1$, $j=2,\dots,\ell$  and \eqref{equ-beta}, we have
$$
\|u_{n+1}(z)-u_n(z)\|_{D(A^\ga)} 
\leq C\sum_{j=1}^\ell |z|^{\be_j} \int_0^1 (1-r)^{\be_j-1}
        \|u_n(rz)-u_{n-1}(rz)\|_{D(A^\ga)}\d r.
$$
Consequently, by inductive assumption, we can prove
$$
\|u_{n+1}(z)- u_n(z)\|_{D(A^\ga)}
\leq C\sum_{j=1}^\ell |z|^{\be_j} 
      \int_0^1 (1-r)^{\be_j-1} M_1M^{n-1}
         \left(\sum_{i=1}^\ell J^{\be_i}\right)^{n-1}\big(g\big)(r|z|) \d r.
$$
After making the change of variable $r\to r|z|$ and from the definition of the Riemann-Liouville fractional integral, we see that
\begin{align*}
&\|u_{n+1}(z)- u_n(z)\|_{D(A^\ga)}
\leq CM_1M^{n-1}\sum_{j=1}^\ell 
   \int_0^{|z|} (|z|-r)^{\be_j-1} 
      \left(\sum_{i=1}^\ell J^{\be_i}\right)^{n-1}\big(g\big)(r) \d r
\\
=& CM_1M^{n-1}\sum_{j=1}^\ell 
     \Ga(\be_j) J^{\be_j}  
        \left( \left(\sum_{i=1}^\ell J^{\be_i}\right)^{n-1}\big(g\big)\right) (|z|)
\\
\le &CM_1M^{n-1}\max_{1\le j\le\ell}\{\Ga(\be_j)\}
     \sum_{j=1}^\ell J^{\be_j} 
        \left( \left(\sum_{i=1}^\ell J^{\be_i}\right)^{n-1}\big(g\big)\right) (|z|)
=M_1M^n \left(\sum_{i=1}^\ell J^{\be_i}\right)^n\big(g\big)(|z|),
\end{align*}
where we set $M:=C\max_{1\le j\le\ell}\{\Ga(\be_j)\}$. Therefore by indcution, \eqref{esti-u_n} holds true. Moreover, noting the semigroup property
$$
J^\al J^\be = J^{\al+\be},\quad \al\ge0,\ \be\ge0,
$$
and the effect of the operator $J^\al$ on the power functions
$$
J^\al t^\be 
= \f{\Ga(\be+1)}{\Ga(\be+1+\al)} t^{\al+\be},
\quad \al\ge0,\ \be>-1,\ t>0,
$$
we derive
\begin{align}
\label{esti-u_n-T}
&\|u_{n+1}(z)- u_n(z)\|_{D(A^\ga)}
\leq M_1M^n 
\sum_{k_1+\dots+k_\ell=n}  
  \binom{n}{k_1}\dots \binom{n}{k_\ell}  
   J^{\be_1k_1+\dots+\be_\ell k_\ell}(g)(|z|)
\nonumber\\
=&M_1M^n 
\sum_{k_1+\dots+k_\ell=n}  
  \binom{n}{k_1}\dots \binom{n}{k_\ell} 
  \f{\Ga(1-\al_1\ga)|z|^{\be_1k_1+\dots+\be_\ell k_\ell-\al_1\ga}}
       {\Ga(\be_1k_1+\dots+\be_\ell k_\ell+1-\al_1\ga)},
\quad z\in S_{\te,T}.
\end{align}
Here we noticed for any subset $K$ compacted in $S_{\te,T}$ that
$$
\sum_{n=0}^{\infty} M^n 
\sum_{k_1+\dots+k_\ell=n}  
  \binom{n}{k_1}\dots \binom{n}{k_\ell} 
  \f{\Ga(1-\al_1\ga)|z|^{\be_1k_1+\dots+\be_\ell k_\ell-\al_1\ga}}
       {\Ga(\be_1k_1+\dots+\be_\ell k_\ell-\al_1\ga+1)}
$$
converges uniformly in $K$. In fact, the asymptotic behavior \eqref{esti-Gamma} yields
$$
\Ga(\be_1k_1+\dots+\be_\ell k_\ell-\al_1\ga+1)
\ge C\Ga(\underline{\be}(k_1+\dots+k_\ell)-\al_1\ga+1)
=C\Ga(\underline{\be}n-\al_1\ga+1),
$$
and noting that $\sum_{k_1+\dots+k_\ell=n}\binom{n}{k_1}\dots\binom{n}{k_\ell}
=\ell^n$, it follows for $z\in S_{\te,T}$ that
$$
\sum_{n=0}^{\infty} 
\sum_{k_1+\dots+k_\ell=n}  
  \binom{n}{k_1}\dots \binom{n}{k_\ell} 
  \f{ M^n|z|^{\be_1k_1+\dots+\be_\ell k_\ell-\al_1\ga}}
       {\Ga(\be_1k_1+\dots+\be_\ell k_\ell-\al_1\ga+1)}
\leq C\sum_{n=0}^{\infty} 
        \ell^n \f{M^n T^{\ov{\be}n}|z|^{-\al_1\ga}}
                    {\Ga(\underline{\be}n-\al_1\ga+1)},
$$
where $\ov{\be}:=\max_{1\le j\le\ell}\{\be_j\}$, $\underline{\be}:=\min_{1\le j\le\ell}\{\be_j\}$. Again using the asymptotic behavior (\ref{esti-Gamma}), we find
$$
\f{\ell^{n+1}M^{n+1}T^{\ov \be (n+1)}}
     {\Ga(\underline{\be}(n+1)-\al_1\ga+1)}
\Big/ \f{\ell^n M^n T^{\ov \be n}}
       {\Ga(\underline \be n-\al_1\ga+1)}
\longrightarrow 0 \quad \mbox{as $n \to \infty$},
$$
so that 
$$
\sum_{n=0}^\infty
  M^n \sum_{k_1+\dots+k_\ell=n}  
        \binom{n}{k_1}\dots \binom{n}{k_\ell} 
        \f{|z|^{\be_1k_1+\dots+\be_\ell k_\ell-\al_1\ga}}
             {\Ga(\be_1k_1+\dots+\be_\ell k_\ell-\al_1\ga+1)}
< \infty.
$$ 
Hence the majorant test implies $\sum_{n=1}^\infty \|u_{n+1}(z) - u_n(z)\|_{D(A^\ga)}$ is convergent uniformly in any compact subset of $S_{\te,T}$. Therefore there exists $u_*(z)\in D(A^\ga)$ such that $\|u_n(z) - u_*(z)\|_{D(A^\ga)}$ tends to $0$ as $n \rightarrow \infty$ uniformly in any compact subset of $S_{\te,T}$. We thus assert that $u=u_*|_{\Om\times(0,T]}$ is the unique solution to the integral equation \eqref{equ-int-u}.

Furthermore, we can see from \eqref{esti-u_n} that $\|u_*(t)\|_{D(A^\ga)} =O(\e^{Ct})$, as $t\to\infty$. Indeed, for any $T\ge1$ and $0< t\le T$, we have
\begin{align*}
&\| A^\ga u_*(t)\|_\LL
\leq \sum_{n=0}^\infty
       \| A^\ga u_{n+1}(t)-A^\ga u_n(t)\|_\LL
\\
\leq& M_1\sum_{n=0}^{\infty} 
        M^n \sum_{k_1+\dots+k_\ell=n}  
              \binom{n}{k_1}\dots \binom{n}{k_\ell} 
              \f{\Ga(1-\al_1\ga)T^{\be_1k_1+\dots+\be_\ell k_\ell-\al_1\ga}}
                   {\Ga(\be_1k_1+\dots+\be_\ell k_\ell-\al_1\ga+1)}
=:M_1\Ga(1-\al_1\ga)H(T).
\end{align*}
The estimate of $H(t)$ as $t\to\infty$ follows from the fact that the Laplace transform
\begin{align*}
\L H(s)
:=&\int_0^\infty \sum_{n=0}^{\infty}  
    \sum_{k_1+\dots+k_\ell=n}  \binom{n}{k_1}\dots \binom{n}{k_\ell} 
      \f{M^n t^{\be_1k_1+\dots+\be_\ell k_\ell-\al_1\ga}}
           {\Ga(\be_1k_1+\dots+\be_\ell k_\ell-\al_1\ga+1)}\e^{-st} \d t
\\
 =&\f{s^{\al_1\ga-1}}{1-M\sum_{j=1}^\ell s^{-\be_j}},
\end{align*}
where $\Re s>M_2$ and $M_2>0$ is a sufficiently large constant, has only finite simple poles in the main sheet of Riemann surface cutting off the negative axis. We denote the poles as $\{s_1,\dots,s_m\}\subset\C$. Moreover, we can see that $s_i\in\R$ and $s_i>0$, $i=1,\dots,m$. Indeed, for $s:=r\e^{\i\te}$ with $\te\in[-\pi,\pi]$ such that $1-M\sum_{j=1}^\ell s^{-\be_j}=0$, that is, 
$$
   \sum_{j=1}^\ell r^{-\be_j}\cos\be_j\te
-\i\sum_{j=1}^\ell r^{-\be_j}\sin\be_j\te
 =\f 1 M,
$$
which implies $\sum_{j=1}^\ell r^{-\be_j}\sin\be_j\te=0$, and noting that $\sin\be_j\te$ ($j=1,\dots,\ell$) have the same signals, hence $\te=0$. Now by Fourier-Mellin formula (e.g., \cite{S99}), we have
$$
H(t)=\f{1}{2\pi \i}\int_{M_2-\i\infty}^{M_2+\i\infty} \L H(s)\e^{st} \d s.
$$
Now we choose a small constant $0<\ga<\min\{s_1,\dots,s_m\}$ such that $1-M\sum_{j=1}^\ell s^{-\be_j}\ne 0$ for $s\ne s_1,\dots,s_m$, $\Re s\ge\ga$, and then by Residue Theorem (e.g., \cite{R87}), for $t>0$ we see that
$$
H(t)
=\sum_{j=1}^m a_{j} \e^{s_jt} 
+\f{1}{2\pi\i}\int_{\ga-\i\infty}^{\ga+\i\infty} \L H(s)\e^{st} \d s,
$$
where $a_{j}:=\lim_{s\to s_j}(s-s_j)\L H(s)$, and the shift in the line of integration is justified by the fact $e^{st}\L H(s)\to 0$ as $\Im\, s\to\infty$ with $\Re s$ bounded.

Integration by parts shows
\begin{align*}
&\int_{\ga-\i\infty}^{\ga+\i\infty} \L H(s)\e^{st} \d s
= \f{s^{\al_1\ga-1}}{1-M\sum_{j=1}^\ell s^{-\be_j}}
   \f{\e^{st}}{t}\bigg|_{s=\ga-\i\infty}^{s=\ga+\i\infty} \\
&\qquad\qquad\quad-\int_{\ga-\i\infty}^{\ga+\i\infty} 
     \f{\e^{st}}{t} \f{(\al_1\ga-1)s^{\al_1\ga-2}(1-M\sum_{j=1}^\ell s^{-\be_j})
                  -s^{\al_1\ga-1}M\sum_{j=1}^\ell \be_js^{-\be_j-1}}
                {(1-M\sum_{j=1}^\ell s^{-\be_j})^2} \d s.
\end{align*}
Therefore
$$
\left|H(t)-\sum_{j=1}^m a_{j} \e^{s_jt}\right| 
= O\left(\f 1 t \e^{\ga t}\right), \quad t\to\infty.
$$
See e.g., \cite{H81} as for a similar argument. Consequently, we derive that 
$$
H(t)=O(\e^{Ct}),\quad t\to\infty,
$$
which implies
\begin{equation}
\label{esti-u*}
\|u_*(t)\|_{D(A^{\ga})}
\leq Ct^{-\al_1\ga}\e^{CT}\|a\|_\LL,
\quad t\in(0,T].
\end{equation}

Let us turn to show the analyticity of the solution with respect to $z$. For this, by induction, we first prove that $u_n: S_{\te,T}\to D(A^\ga)$ is analytic for $n=0,1,\dots$. By $u_0\equiv0$, it is obvious for $n=0$. We inductively assume that $u_n: S_{\te,T}\to D(A^\ga)$ is analytic in $z$. We estimate the integrands in \eqref{def-u_n}. The use of \eqref{esti-S(t)} implies
\begin{align*}
&\|A^{-1} S'(rz)(1-r)^{-\al_j} q_ja\|_{D(A^\ga)} 
\\
\le& C(rz)^{\al_1-\al_1\ga-1} (1-r)^{-\al_j} \|a\|_\LL
\\
\le& C|z|^{\al_1-\al_1\ga-1} r^{\al_1-\al_1\ga-1} (1-r)^{-\al_j} \|a\|_\LL,
\end{align*}
\begin{align*}
&\|A^{-1} S'(rz) (B\cdot\na u_n + bu_n)((1-r)z))\|_{D(A^\ga)} 
\\
\le& C(rz)^{\al_1-\al_1\ga-1} \|u_n((1-r)z)\|_{D(A^{\f 1 2})}
\\
\le& C|z|^{\al_1-\al_1\ga-1} r^{\al_1-\al_1\ga-1} \|u_n((1-r)z)\|_{D(A^{\f 1 2})},
\end{align*}
\begin{align*}
&\|A^{-1} S'((1-s)rz)r^{-\al_j}s^{-\al_j} q_ju_n((1-r)z)\|_{D(A^\ga)}
\\
\le & C((1-s)rz)^{\al_1-1} r^{-\al_j}s^{-\al_j} \|u_n((1-r)z)\|_{D(A^\ga)}
\\
\le& C|z|^{\al_1-1} r^{\al_1-\al_j-1} (1-s)^{-\al_j} s^{-\al_j} \|u_n((1-r)z)\|_{D(A^\ga)}
\end{align*}
and
\begin{align*}
&\|A^{-1} S''((1-s)rz)(1-s)r^{1-\al_j}s^{-\al_j} q_ju_n((1-r)z)\|_{D(A^\ga)}
\\
\le & C((1-s)rz)^{\al_1-2} (1-s)r^{1-\al_j}s^{-\al_j} \|u_n((1-r)z)\|_{D(A^\ga)}
\\
\le& C|z|^{\al_1-2} r^{\al_1-\al_j-1} s^{-\al_j}(1-s)^{\al_1-1}  \|u_n((1-r)z)\|_{D(A^\ga)}.
\end{align*}
Moreover, in view of \eqref{esti-Gamma} and \eqref{esti-u_n-T}, it follows that 
$$
\|u_n(z)\|_{D(A^\ga)} 
\le C_n |z|^{-\al_1\ga} \|a\|_\LL,
\quad z\in S_{\te,T},
$$
hence that the $D(A^\ga)$-norm of the integrands in \eqref{def-u_n} are integrable in $r,s\in(0,1)$. Therefore $u_{n+1}: S_{\te,T}\to D(A^\ga)$ is analytic. Thus by induction, we see that $u_n: S_{\te,T}\to D(A^\ga)$ is analytic for all $n\in\N$. Since we have proved that $\sum_{n=1}^\infty \|u_{n+1}(z) - u_n(z)\|_{D(A^\ga)}$ is convergent 
uniformly in any compact subset of $S_{\te,T}$, therefore $u_*:S_{\te,T}\to D(A^\ga)$ is analytic. Moreover, since $T$ and $\te$ are arbitrarily chosen, we deduce $u_*$ is analytic in the sector  $\{s\in\C\setminus\{0\}; |\arg s|<\f{\pi}{2}\}$. 

Finally, we see that $u(\cdot,t)$ ($t\in(0,T]$) is just the solution to \eqref{equ-multifrac} and such that
$$
\|u(\cdot,t)\|_{H^{2\ga}(\Om)} \le Ct^{-\al_1\ga} e^{CT}\|a\|_\LL,
\quad \ga\in[\f 1 2,1),\ t\in(0,T]
$$
in view of \eqref{esti-u*}. This completes the proof of the theorem.
\end{proof}
\begin{rem}
\label{rem-regularity}
If we furthermore assume that 
$$
\mbox{$b\in W^{1,\infty}(\Om)$ or $b(\le0)\in L^\infty(\Om)$, and $B\in \{W^{1,\infty}(\Om)\}^d$,}
$$
we point out that the solution $u(t)$ to the initial-boundary value problem \eqref{equ-multifrac} can achieve more regularity on time and space, that is, $u\in C((0,T]; H_0^1(\Om)\cap \HH)$ and
$$
\|u(t)\|_\HH 
\le Ct^{-\al_1} e^{CT} \|a\|_\LL,
\quad 0<t\le T.
$$
\end{rem}

Now let us turn to the proof of Theorem \ref{thm-H2}.
\begin{proof}[\bf Proof of Theorem \ref{thm-H2} ]
Taking the operator $A$ on the both sides of \eqref{equ-int-u}, we evaluate each of the five terms separately. Estimate of $I_1(t)$. We conclude from \eqref{esti-S(t)} that
$$
\|I_1(t)\|_{D(A)}
\leq Ct^{-\al_1}\|a\|_\LL,\quad t\in(0,T].
$$
In order to estimate $\|I_2(t)\|_{D(A)}$, we break up the integral in $I_2$ into two integrals as follows
\begin{align*}
I_2(t)
&=\sum_{j=2}^\ell \int_0^{\f t 2} A^{-1}S'(r)(t-r)^{-\al_j}  \f{q_ja}{\Ga(1-\al_j)}\d r
 +\sum_{j=2}^\ell \int_{\f t 2}^t A^{-1}S'(r)(t-r)^{-\al_j}  \f{q_ja}{\Ga(1-\al_j)}\d r
\\
&=:I_{21}(t)+I_{22}(t).
\end{align*}
For $\|I_{21}(t)\|_{D(A)}$. Integrating by parts derives
\begin{align*}
AI_{21}(t)
=\sum_{j=2}^\ell \f{1}{\Ga(1-\al_j)}
   S(r)(t-r)^{-\al_j}q_ja \Big|_{r=0}^{r=\f t 2}
-\sum_{j=2}^\ell \f{\al_j}{\Ga(1-\al_j)} \int_0^{\f t 2}S(r)(t-r)^{-\al_j-1}  q_ja\d r.
\end{align*}
Since $\|S(t)\|_{\LL\to\LL}$ is uniformly bounded for $t\in[0,T]$, it is easily seen that 
$$
\|I_{21}(t)\|_{D(A)}\le C\sum_{j=2}^\ell t^{-\al_j}\|a\|_\LL,\quad t\in(0,T].
$$
For $\|I_{22}(t)\|_{D(A)}$, from \eqref{esti-S(t)}, it follows
$$
\|I_{22}(t)\|_{D(A)}
\le C\sum_{j=2}^\ell \int_{\f t 2}^t r^{-1}(t-r)^{-\al_j}\d r\|a\|_\LL
\le C\sum_{j=2}^\ell t^{-\al_j}\|a\|_\LL.
$$
For $I_3(t)$, $0<t\le T$, again from \eqref{esti-S(t)}, recalling the definition of $S(t)$ in \eqref{def-S(t)}, we derive
\begin{align*}
&\|I_3(t)\|_{D(A)}^2
=\left\| \int_0^t S'(t-r)(B\cdot\na u(r)+bu(r))\d r \right\|_\LL^2
\\
\le& \sum_{n=1}^\infty
     \left| \int_0^t \la_n(t-r)^{\al_1-1}
                E_{\al_1,\al_1}(-\la_n(t-r)^{\al_1})
         (B\cdot\na u(r)+bu(r),\phi_n)\d r \right|^2.
\end{align*}
Thus the Young inequality implies
$$
   \int_0^T\|I_3(t)\|_{D(A)}^2 \d t
   \leq \sum_{n=1}^\infty
     \Big(
     \int_0^T \la_n r^{\al_1-1}\left|E_{\al_1,\al_1}(-\la_n r^{\al_1})\right| \d r\Big)^2
     \int_0^T|(B\cdot\na u(r)+bu(r),\phi_n)|^2 \d r.
$$
Moreover, the use of Lemma \ref{lem-ML} derives that
$$
\int_0^T \left|\la_n r^{\al_1-1}E_{\al_1,\al_1}(-\la_n r^{\al_1})\right|\d r
=\int_0^T \la_n r^{\al_1-1}E_{\al_1,\al_1}(-\la_n r^{\al_1})\d r
=1-E_{\al_1,1}(-\la_nT^{\al_1})
\le C_T,
$$
thereby obtaining the inequalities
\begin{align*}
   \int_0^T\|I_3(t)\|_{D(A)}^2 \d t
   \le& C_T\int_0^T\sum_{n=1}^\infty |(B\cdot\na u(r)+bu(r),\phi_n)|^2\d r
   \le C_T\int_0^T\|u(t)\|_{H^1(\Om)}^2 \d t
\\
   \le& C_T\int_0^T (t^{-\f 1 2 \al_1}\|a\|_\LL)^2\d t
   \le C_T\|a\|_\LL^2.
\end{align*}
Here in the last inequality we used the estimate \eqref{esti-u*}. For $I_4(t)$, from \eqref{esti-S(t)}, select $\ep>0$ small enough so that $\al_1(1-\ep)>\al_j$ ($j=2,\dots,\ell$),  and similar to the argument used in Theorem \ref{thm-homo}, it follows that
\begin{align*}
\|I_4(t)\|_{D(A)} 
\le& C\left\|\sum_{j=2}^\ell \int^t_0\int^1_0 A^{\ep-1}S''\big((1-s)(t-r)\big)(1-s) (t-r)^{1-\alpha_j} s^{-\alpha_j} A^{1-\ep}q_ju(r)  \d s \d r \right\|_\LL
\\
\le& C\sum_{j=2}^\ell  \left(\int_0^1 (1-s)^{\al_1-\al_1\ep-1} s^{-\al_j}\d s\right)
          \int_0^t (t-r)^{\al_1(1-\ve)-1-\al_j} \|A^{1-\ve}u(r)\|_\LL \d r.
\end{align*}
Again the use of \eqref{esti-u*} leads to
$$
\|I_4(t)\|_{D(A)}\le C_T\sum_{j=2}^\ell t^{-\al_j}\|a\|_\LL,
\quad 0<t\le T.
$$
For $I_5(t)$ we argument similarly to obtain
$$
\|I_5(t)\|_{D(A)}
\le C_T\sum_{j=2}^\ell t^{-\al_j}\|a\|_\LL.
$$
Finally, we proved that for any $t\in(0,T]$, the solution $u$ satisfies
$$
\left(\int_0^T \|u(t)\|_\HH^p \d t\right)^{\f 1 p}
\le C_T \|a\|_\LL,\quad 1\le p<\min\{2,\tfrac{1}{\al_1}\}.
$$
This completes the proof of the theorem.
\end{proof}

\subsection{Long-time asymptotic behavior}
\label{sec-asymp}

In this part, we investigate the large time behavior of the solution $u$ to the IBVP \eqref{equ-multifrac} under the assumptions that $B\equiv0$ and $b\le0$. Based on the results in Section \ref{sec-wellposed} and Remark \ref{rem-regularity}, we know that the mild solution $u$ to the problem \eqref{equ-multifrac} uniquely exists in $H_0^1(\Omega)\cap H^2(\Omega)$ for any $t\in(0,T]$ and admits 
\begin{equation}
\label{esti-H2}
\|u(\cdot,t)\|_\HH
\le Ct^{-\al_1}e^{CT}\|a\|_\LL,
\quad t\in(0,T].
\end{equation}
Thus the asymptotic behavior near $0$ is only related to the largest order of the fractional derivatives. As for the long-time asymptotic behavior, for $\ell=1$ in \eqref{equ-multifrac}, Sakamoto and Yamamoto \cite{SY11} asserts that the solution decays in polynomial $t^{-\alpha_1}$ as $t\to\infty$, which is a typical property of fractional diffusion equations in contrast to the exponential decay in the classical diffusion equations. In Li et al. \cite{LiLiuYa14}, the IBVP \eqref{equ-multifrac} with positive-constant coefficients was investigated. The Laplace transform in time was applied to show that the decay rate is indeed $t^{-\alpha_\ell}$ at best as $t\to\infty$, where $\alpha_\ell$ is the lowest order of the time-fractional derivatives. 

Here in this section we are devoted to the long-time asymptotic behavior of the solution to the IBVP \eqref{equ-multifrac}, and attempt to establish results parallel to that for the case of positive-constant coefficients.

The key idea to prove the main result in this section is using Laplace and inversion Laplace transforms. From \eqref{esti-H2}, we can apply the Laplace transform $\wh{\cdot}$ on both sides on the equation in \eqref{equ-multifrac}, and use the formula
$$
\wh{\pa_t^\al f}(s)=s^\al\wh{f}(s)-s^{\al-1}f(0+),
$$
to derive the transformed algebraic equation
$$
(\A-b(x))\wh u(x;s)+ Q(x;s) \wh u(x;s)
= s^{-1}Q(x;s) a(x),\quad \Om\times\{\Re s>M\},
$$
where we set 
$$
Q(x;s):=\sum_{j=1}^\ell q_j(x) s^{\al_j}.
$$
We check at once that $\wh u:\{\Re s>M\}\to \HH$ is analytic, which is clear from the property of Laplace transform. Moreover, we claim that $\wh{u}(x,s)$ ($\Re s>M$) can be analytically extended to the sector 
$$
S_{\te}:=\{ s\in\C\setminus\{0\};\ |\arg\,s|<\te\},
\quad \tfrac{\pi}{2}<\te<\min\{\tfrac{\pi}{2\al_1},\pi\}.
$$
More precisely, the following lemma holds.
\begin{lem}
\label{lem-lap-u}
Under the assumptions in Theorem \ref{thm-homo}, the Laplace transform $\wh u$ of the unique mild solution $u$ to the initial-boundary value problem \eqref{equ-multifrac} can be analytically extended to the sector $S_\te$. Moreover, there exists a positive constant $C$ only depending on $d$, $\Om$, $\te$, $b$, $\{\al_j\}_{j=1}^\ell$, $\{q_j\}_{j=1}^\ell$, $\{a_{ij}\}_{i,j=1}^d$ such that
\begin{equation}
\label{esti-lap-u}
\|\wh u(\cdot\,;s)\|_{H^1(\Om)}
\leq C\sum_{j=1}^\ell r^{\al_j-1}\|a\|_\LL, \quad 
\forall s=r\e^{\i \rho}\in S_{\te}.
\end{equation}
\end{lem}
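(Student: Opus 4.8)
The plan is to analyze the transformed algebraic equation
$$
(\A-b(x))\wh u(x;s)+ Q(x;s) \wh u(x;s) = s^{-1}Q(x;s) a(x)
$$
directly in the space variable, treating it as a family of elliptic boundary value problems parametrized by $s$. First I would rewrite it as $(\A - b + Q(\cdot;s))\wh u(\cdot;s) = s^{-1}Q(\cdot;s)a$, and establish that for every $s\in S_\te$ the operator $\A - b + Q(\cdot;s)$, with homogeneous Dirichlet condition, is invertible from $H^2(\Om)\cap H_0^1(\Om)$ onto $L^2(\Om)$. The natural way to get this is via the associated sesquilinear form on $H_0^1(\Om)$: for $\phi\in H_0^1(\Om)$ one has
$$
\big((\A-b+Q(\cdot;s))\phi,\phi\big) = \sum_{i,j}\int_\Om a_{ij}\pa_i\phi\,\ov{\pa_j\phi}\,\d x - \int_\Om b|\phi|^2\,\d x + \int_\Om Q(x;s)|\phi|^2\,\d x.
$$
The first two terms are real and, by ellipticity and $b\le0$, bounded below by $\nu\|\na\phi\|_\LL^2$. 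The key point is to control $Q(x;s)$: writing $s = r\e^{\i\rho}$ with $|\rho|<\te$, one has $Q(x;s) = \sum_j q_j(x) r^{\al_j}\e^{\i\al_j\rho}$, and since $q_j\ge0$ and $|\al_j\rho| < \al_1\te < \pi/2$, the real part of each term $q_j(x)r^{\al_j}\e^{\i\al_j\rho}$ is $\ge q_j(x)r^{\al_j}\cos(\al_1\te) \ge 0$. Hence $\Re\big((\A-b+Q(\cdot;s))\phi,\phi\big) \ge \nu\|\na\phi\|_\LL^2 \ge c\|\phi\|_{H^1(\Om)}^2$ (Poincaré), so the form is coercive uniformly in $s\in S_\te$, and Lax--Milgram gives a unique $\wh u(\cdot;s)\in H_0^1(\Om)$, with elliptic regularity lifting it to $H^2(\Om)\cap H_0^1(\Om)$.

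Next I would extract the quantitative bound \eqref{esti-lap-u}. Testing the equation against $\wh u(\cdot;s)$ itself and taking real parts,
$$
c\|\wh u(\cdot;s)\|_{H^1(\Om)}^2 \le \Re\big(s^{-1}Q(\cdot;s)a,\wh u(\cdot;s)\big) \le |s|^{-1}\|Q(\cdot;s)\|_{L^\infty(\Om)}\|a\|_\LL\|\wh u(\cdot;s)\|_\LL.
$$
Since $\|Q(\cdot;s)\|_{L^\infty(\Om)} \le \sum_j \|q_j\|_{L^\infty(\Om)} r^{\al_j} \le C\sum_j r^{\al_j}$ and $|s|^{-1} = r^{-1}$, dividing through by $\|\wh u(\cdot;s)\|_{H^1(\Om)}$ yields $\|\wh u(\cdot;s)\|_{H^1(\Om)} \le C\sum_{j=1}^\ell r^{\al_j-1}\|a\|_\LL$, which is exactly \eqref{esti-lap-u}. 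For the analyticity in $s$ on $S_\te$: the map $s\mapsto Q(\cdot;s)$ is analytic into $L^\infty(\Om)$ (finite sum of $s^{\al_j}$ times fixed functions, and $s^{\al_j}$ is holomorphic on the cut plane containing $S_\te$), so $s\mapsto \A-b+Q(\cdot;s)\in \mathcal L(H^2\cap H_0^1, \LL)$ is analytic and boundedly invertible by the coercivity above; composing with the analytic right-hand side $s\mapsto s^{-1}Q(\cdot;s)a\in\LL$ shows $\wh u(\cdot;s) = (\A-b+Q(\cdot;s))^{-1}(s^{-1}Q(\cdot;s)a)$ is an analytic $H^2(\Om)\cap H_0^1(\Om)$-valued function on $S_\te$. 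Finally, on the half-plane $\{\Re s>M\}$ this extension coincides with the Laplace transform of $u$ (both solve the same algebraic equation and the Laplace transform of $u$ is well-defined and analytic there thanks to the bound \eqref{esti-H2}), so by uniqueness of analytic continuation $\wh u$ genuinely extends to $S_\te$.

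The main obstacle I anticipate is the sign/angle bookkeeping that makes the coercivity work: one must verify carefully that $\frac{\pi}{2}<\te<\min\{\frac{\pi}{2\al_1},\pi\}$ forces $|\al_j\rho|<\al_1\te<\frac{\pi}{2}$ for every $j$ and every $|\rho|<\te$, so that $\cos(\al_j\rho)$ stays bounded below by a positive constant and the nonnegativity of the $q_j$ can actually be exploited (this is precisely where the hypotheses $q_j\ge0$ and the restriction on $\te$ enter, and it is the analogue of the pole-free statement mentioned after Theorem \ref{thm-asymp}). A secondary technical point is the passage from the weak $H_0^1$ solution to $H^2$-regularity with a constant uniform in $s$; this follows from standard elliptic estimates since the zeroth-order coefficient $-b+Q(\cdot;s)$ has nonnegative real part, but one should note the $H^2$-bound will carry the full $\|Q(\cdot;s)\|_{L^\infty}$ and hence is not uniform as $r\to\infty$ — only the $H^1$-bound \eqref{esti-lap-u}, which is what the subsequent inversion argument needs, has the clean form claimed.
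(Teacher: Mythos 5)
Your proof is correct, and for the quantitative bound \eqref{esti-lap-u} it is essentially identical to the paper's: both arguments test the transformed equation against $\wh u(\cdot\,;s)$ via the sesquilinear form, take real parts, and exploit $\Re Q(x;s)=\sum_j q_j(x)r^{\al_j}\cos(\al_j\rho)\ge 0$ (which needs $q_j\ge0$, $b\le0$ and the angle restriction $\al_j|\rho|\le\al_1\te<\pi/2$ coming from $\te<\pi/(2\al_1)$ --- your ``bookkeeping'' worry is exactly the point, and it checks out), followed by ellipticity, Poincar\'e and Cauchy--Schwarz. Note, as you implicitly do, that the hypotheses actually needed here are those of Theorem \ref{thm-asymp} ($B\equiv0$, $b\le0$, $q_j\ge0$), not merely those of Theorem \ref{thm-homo} as the lemma's wording suggests. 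Where you genuinely diverge is the analytic-extension part: the paper first extends $u(t)$ itself to the sector $|\arg z|<\pi/2$ (Theorem \ref{thm-homo}) and then invokes a result of Pr\"uss \cite{P93} to conclude that the Laplace transform extends to the wider sector $S_\te$; you instead build the extension directly as $s\mapsto(\A-b+Q(\cdot\,;s))^{-1}\bigl(s^{-1}Q(\cdot\,;s)a\bigr)$, an analytic family of solutions to uniformly coercive elliptic problems, and identify it with $\wh u$ on $\{\Re s>M\}$ by uniqueness of the elliptic solution plus uniqueness of analytic continuation. Your route is more self-contained (no appeal to Pr\"uss or to the time-analyticity of $u$), at the price of having to justify that $\wh u$ satisfies the transformed equation on the half-plane in the first place, i.e.\ the validity of $\wh{\pa_t^{\al}f}(s)=s^{\al}\wh f(s)-s^{\al-1}f(0+)$, which rests on the regularity estimate \eqref{esti-H2} exactly as in the paper. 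Your closing remark that only the $H^1$ bound, not the $H^2$ bound, is uniform in $|s|$ is also consistent with what the paper actually claims and uses.
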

\begin{proof}
Firstly from Theorem \ref{thm-homo}, we see that the solution $u$ to the initial-boundary value problem \ref{equ-multifrac} can be analytically extended to the sector $S_{\frac{\pi}2}:=\{ s\in\C\setminus\{0\}; |\arg s|<\f{\pi}{2} \}$. Therefore by an argument similar to the proof of Theorem 0.1 in \cite{P93}, we can prove that the Laplace transform $\wh u: \{\Re s>M\}\to \HH$ can be analytically extended to the sector $S_\te$.

Now let us turn to give an estimate for the Laplace transform $\wh u: S_\te\to\HH$. For this, we define a bilinear operator $B[\Phi,\Psi;s]: H_0^1(\Om)\times H_0^1(\Om)\to\C$ by
$$
B[\Phi,\Psi;s]
:=\int_\Om\big((\A-b)\Phi(x)\big) \ov{\Psi(x)}+Q(x;s)\Phi(x)\ov{\Psi(x)} \d x,\quad 
s\in S_\te,
$$
where $\ov\Psi$ denotes conjugate of $\Psi$. Integration by parts yields
$$
B[\Phi,\Psi;s]
=\int_\Om\sum_{i,j=1}^d a_{ij}\pa_i \Phi(x) \ov{\pa_j\Psi(x)}+(Q(x;s)-b(x))\Phi(x)\ov{\Psi(x)} \d x.
$$
Taking $\Phi=\Psi$ implies
$$
B[\Phi,\Phi;s] 
=\int_\Om \sum_{i,j=1}^d a_{ij}\pa_i \Phi(x) \ov{\pa_j \Phi(x)}+(Q(x;s)-b(x))|\Phi(x)|^2 \d x,
$$
hence that
\begin{align*}
\Re (B[\Phi,\Phi;s]) 
=\int_\Om \sum_{i,j=1}^d a_{ij}
(\Re\pa_i\Phi(x) \Re\pa_j\Phi(x) +\Im\pa_i\Phi(x)\Im\pa_j\Phi(x))
\\+(\Re Q(x;s)-b(x))|\Phi(x)|^2 \d x.
\end{align*}
From $\Re Q(x;s)=\sum_{j=1}^\ell q_j(x)r^{\al_j}\cos\al_j\rho>0$ in view of $s=r\e^{\i\rho}\in S_\te$, and $b\le0$, it follows that
$$
\Re(B[\Phi,\Phi;s]) 
\geq\int_\Om \sum_{i,j=1}^d a_{ij} \big(\Re\pa_i\Phi(x) \Re\pa_j\Phi(x)
            + \Im\pa_i\Phi(x) \Im\pa_j\Phi(x) \big)\d x.
$$
The ellipticity of $\{a_{ij}\}$ and the use of Poincar\'e's inequality imply
$$
\Re(B[\Phi,\Phi;s]) 
\geq C \| \Re \na \Phi\|_\LL^2 + C\|\Im \na \Phi\|_\LL^2
\geq C\|\Phi\|_{H^1(\Om)}^2.
$$
Consequently
\begin{align*}
C\|\wh u(\cdot\,;s)\|_{H^1(\Om)}^2
\le& |B[\wh u(\cdot\,;s), \wh u(\cdot\,;s)\,;s] |
= \big|(s^{-1}Q(\cdot;s))a,\wh u(\cdot\,;s))_\LL\big|
\\
\le& C\sum_{j=1}^\ell |s|^{\al_j-1} \|a\|_\LL \|\wh u(\cdot\,;s)\|_{H^1(\Om)}
\end{align*}
in view of the H\"{o}lder inequality, finally that 
$$
\|\wh u(\cdot\,;s)\|_{H^1(\Om)}
\le C\sum_{j=1}^\ell |s|^{\al_j-1} \|a\|_\LL,
\mbox{ for $s\in S_\te$}.
$$
The proof of Lemma \ref{lem-lap-u} is completed.
\end{proof}

\begin{proof}[\bf Proof of Theorem \ref{thm-asymp}]
By Fourier-Mellin formula (e.g., \cite{S99}), we have
$$
u(x,t)=\f{1}{2\pi \i}\int_{M-\i\infty}^{M+\i\infty} \wh u(x;s)\e^{st} \d s.
$$
From Lemma \ref{lem-lap-u}, we see that the Laplace transform $\wh{u}(x;s)$ of the solution to the initial-boundary value problem \eqref{equ-multifrac} is analytic in the sector $S_\te$. Therefore by Residue Theorem (e.g., \cite{R87}), for $t>0$ we see that the inverse Laplace transform of $\wh u$ can be represented by an integral on the contour $\ga(\ep,\te)$ defined as  $\{s\in\C; \arg s=\te,\ |s|\ge\ep\}\cup \{s\in\C; |\arg s|\le\te,\ |s|= \ep\}$, that is 
$$
u(x,t)
=\f{1}{2\pi\i}\int_{\ga(\ep,\te)} \wh u(x;s)\e^{s t} \d s,
$$
where in fact the shift in the line of integration is justified by the estimate \eqref{esti-lap-u}. Moreover, again from the estimate \eqref{esti-lap-u}, we can let $\ep$ tend to $0$, then we have
$$
u(x,t)=\f{1}{2\pi\i}\int_{\ga(0,\te)} \wh{u}(x;s)\e^{s t} \d s.
$$
On the other hand, we can repeat the above argument used in Lemma \ref{lem-lap-u} to derive that $\wh v(\cdot\,;s)\in H_0^1(\Om)$,  where $v$ solves the problem \eqref{equ-single} and
\begin{equation}
\label{esti-lap-v}
\|\wh{v}(\cdot\,;s)\|_{H^1(\Om)}\le C|s|^{\al_\ell-1}\|a\|_\LL,
\mbox{ for $s\in\ga(0,\te)$,}
\end{equation}
hence 
$$
v(t)=\f{1}{2\pi\i}\int_{\ga(0,\te)} \wh v(x;s)\e^{s t} \d s.
$$
Thus
\begin{equation}
\label{esti-u-v}
\|u(\cdot\,,t) - v(\cdot\,,t)\|_\HH
\le C\int_{\ga(0,\te)}\|\wh u(\cdot\,;s)-\wh v(\cdot\,;s)\|_\HH |\e^{st} \d s|.
\end{equation}
Noting that $\wh u - \wh v$ satisfies the following problem
\begin{equation*}
\left\{
\begin{alignedat}{2}
&(\A - b)(\wh u-\wh v)
+q_\ell(x)s^{\al_\ell} (\wh u - \wh v) + \sum_{j=1}^{\ell-1} q_j(x) s^{\al_j} \wh u
= \sum_{j=1}^{\ell-1} q_j(x)s^{\al_j-1} a(x), &\quad  &x\in\Om,\ s\in S_{\te},
\\
&\wh u(x;s) - \wh v(x;s)=0,  &\quad  &x\in\pa\Om,\ s\in S_{\te}.
\end{alignedat}
\right.
\end{equation*}
Then using the boundary regularity estimates in elliptic equations combining the inequality \eqref{esti-lap-u}, we deduce that
\begin{align}
\label{esti-lap-u-v}
&\|\wh u (\cdot\,;s) - \wh v (\cdot\,;s)\|_\HH
\nonumber\\
\le& C|s|^{\al_\ell} \|\wh u(\cdot\,;s) - \wh v(\cdot\,;s)\|_\LL
+ C\left(\sum_{i=1}^\ell \sum_{j=1}^{\ell-1} |s|^{\al_i+\al_j-1} +\sum_{j=1}^{\ell-1} |s|^{\al_j-1}\right)
\|a\|_\LL.
\end{align}
Now for $0<\de_0<1$ small enough such that $C\de_0^{\al_\ell}\le\f 1 2$, we break up the integral in \eqref{esti-u-v} into two parts
\begin{align*}
\|u(\cdot\,,t) - v(\cdot\,,t)\|_\HH
\le& C\left(\int_0^{\de_0} + \int_{\de_0}^\infty\right)
       \|\wh u(\cdot\,;r\e^{\i\te})-\wh v (\cdot\,;r\e^{\i\te})\|_\HH \e^{rt\cos\te} \d r
\\
=&:I_1(t;\de_0) + I_2(t;\de_0).
\end{align*}
For $I_1(t;\de_0)$ $(t>0)$, we conclude from \eqref{esti-lap-u-v} and Poincar\'{e}'s inequality that
$$
\|\wh u(\cdot\,;s) - \wh v(\cdot\,;s)\|_\HH
\le 2C\Big(\sum_{i=1}^\ell \sum_{j=1}^{\ell-1} |s|^{\al_i+\al_j-1} +\sum_{j=1}^{\ell-1} |s|^{\al_j-1}\Big)
\|a\|_\LL,\quad |s|\le \de_0,
$$
which implies
$$
I_1(t;\de_0)
\le \int_0^{\de_0}\|\wh u(\cdot\,;r\e^{\i\te})-\wh v(\cdot\,;r\e^{\i\te})\|_\HH \e^{rt\cos\te} \d r
\le C\Big(\sum_{i=1}^\ell \sum_{j=1}^{\ell-1} t^{-\al_i-\al_j} 
+\sum_{j=1}^{\ell-1} t^{-\al_j}\Big) \|a\|_\LL.
$$
For $I_2(t;\de_0)$ $(t>0)$, the use of \eqref{esti-lap-u-v} yields
\begin{align*}
&\|\wh u(\cdot\,;s) - \wh v(\cdot\,;s)\|_\HH
\\
\le& C|s|^{\al_\ell} \Big(\|\wh u(\cdot\,;s)\|_\LL +\|\wh v(\cdot\,;s)\|_\LL\Big)
+ C\left(\sum_{i=1}^\ell
\sum_{j=1}^{\ell-1} |s|^{\al_i+\al_j-1} +\sum_{j=1}^{\ell-1} |s|^{\al_j-1}\right)\|a\|_\LL,
\end{align*}
where $|s|\geq \de_0$, hence combining \eqref{esti-lap-u} with \eqref{esti-lap-v} gives
$$
I_2(t;\de_0)\le C\left( \sum_{j=1}^\ell t^{-\al_\ell-\al_j} + t^{-2\al_\ell}
           +\sum_{i=1}^\ell\sum_{j=1}^{\ell-1} t^{-\al_i-\al_j} 
           +\sum_{j=1}^{\ell-1} t^{-\al_j}\right)\|a\|_\LL.
$$
Substituting the estimates for $I_1(t;\de_0)$ and $I_2(t;\de_0)$ into \eqref{esti-u-v}, we can assert that
$$
\|u(\cdot\,,t) - v(\cdot\,,t)\|_\HH \le Ct^{-\al} \|a\|_\LL, \mbox{ $t>0$ large enough.}
$$
where $\al:=\min\{2\al_\ell,\al_{\ell-1}\}$.  This completes the proof of Theorem \ref{thm-asymp}.
\end{proof}
\begin{proof}[\bf Proof of Corollary \ref{cor-asymp}]
In order to prove the asymptotic behavior of $u$, we denote $u_\ell=\f{(\A-b)^{-1}(q_\ell a)t^{-\al_\ell}}{\Ga(1-\al_\ell)}$  and notice that the Laplace transform $\wh u_\ell$ of $u_\ell$ is $(\A-b)^{-1}(q_\ell a)s^{\al_\ell-1}$ 
and satisfies 
$$
\A \wh u_\ell-b\wh u_\ell = q_\ell s^{\al_\ell-1} a,\quad \wh u_\ell(\cdot\,;s)\in H_0^1(\Om),\quad 
s\in S_\te.
$$ 
Thus $\wh v - \wh u_\ell$ satisfies
$$
\left\{
\begin{alignedat}{2}
&(\A- b)(\wh v(x;s) - \wh u_\ell(x;s)) 
= - q_\ell(x) s^{\al_\ell} \wh v(x;s), &\quad &x\in\Om,\ s\in S_{\te},
\\
&\wh v(x;s) - \wh{u}_\ell(x;s)=0,  &\quad  &x\in\pa\Om,\ s\in S_{\te},
\end{alignedat}
\right.
$$
Therefore, the regularity estimate for elliptic equations and \eqref{esti-lap-v} combined yield
$$
\|\wh v(\cdot\,;s) - \wh u_\ell(\cdot\,;s)\|_\HH
\le C|s|^{2\al_\ell-1}\|a\|_\LL.
$$
An argument similar to the proof in Theorem \ref{thm-asymp} implies
$$
\|v(\cdot\,,t) - u_\ell(\cdot\,,t)\|_\HH \le Ct^{-2\al_\ell} \|a\|_\LL,
$$
hence
$$
\|u(\cdot\,,t) - u_\ell(\cdot\,,t)\|_\HH 
\le Ct^{-\min\{2\al_\ell,\al_{\ell-1}\}} \|a\|_\LL,
\mbox{ for $t>0$ large enough,}
$$
which completes the proof of Corollary \ref{cor-asymp}.
\end{proof}


\section{Inverse problem}
\label{sec-IP}

In this section, we will give a proof of Theorem \ref{thm-ip-multi}. The basic idea is first to use the Laplace transform to transfer the time-fractional diffusion equation to the corresponding elliptic equation with the Laplacian parameter. Then from the strong maximum principle for the elliptic equation, we can finish the proof.

\begin{proof}[\bf Proof of Theorem \ref{thm-ip-multi}]

According to our assumptions, Theorem \ref{thm-homo} and Remark \ref{rem-regularity}, the solution $u: (0,T]\to H^2(\Omega)\cap H_0^1(\Omega)$ to the IBVP \eqref{equ-multifrac} can be analytically extended from $(0,T)$ to $(0,\infty)$, and by the same notation we denote the extension. Then we arrive at the following initial-boundary value problem
\begin{equation}
\label{equ-multifrac_infty}
\left\{ 
\begin{alignedat}{2}
&\sum_{j=1}^\ell q_j(x) \partial_t^{\alpha_j} u
=-\mathcal A u + b(x)u,
&\quad &(x,t)\in \Omega\times(0,\infty), 
\\
&u(x,0)=a(x), &\quad &x\in \Omega,
\\
&u(x,t)=0, &\quad &(x,t)\in \partial\Omega\times(0,\infty).
\end{alignedat}
\right.
\end{equation}
The same is also true for $\wt u$ in place of $u$. Then taking Laplace transforms $\wh\cdot(s)$ on both sides of the equation \eqref{equ-multifrac_infty} with respect to $\al_j$ and $\wt\al_j$, we find
$$
\left\{\begin{alignedat}{2}
& \sum_{j=1}^\ell q_j(x) s^{\al_j} \wh u(s) + (\mathcal A - b)\wh u(s)=\sum_{j=1}^\ell q_j(x) s^{\al_j-1}a, &\quad& x\in\Om,
\\
& \wh u(s)=0, &\quad& x\in\pa\Om,
\end{alignedat}\right.
$$
and 
$$
\left\{\begin{alignedat}{2}
& \sum_{j=1}^\ell q_j(x) s^{\wt\al_j} \wh{\wt u}(s) + (\mathcal A - b)\wh{\wt u}(s) = \sum_{j=1}^\ell q_j(x) s^{\wt\al_j-1}a, &\quad& x\in\Om,
\\
& \wh{\wt u}(s)=0, &\quad& x\in\pa\Om,
\end{alignedat}\right.
$$
for any $s>M$, where $M>0$ is sufficiently large constant. 

Let us start with some observations mainly about the properties of the Laplace transforms $\wh u(s)$ and $\wh{\wt u}(s)$. Firstly, noting the positivity of the coefficients $q_j$, $-b$, from the strong maximum principle for the elliptic equations, we see that $\wh u(s)$ and $\wh{\wt u}(s)$ are strictly positive in the domain $\Om$ if $a\ge0,\not\equiv0$, and $\wh u(s)$ and $\wh{\wt u}(s)$ are strictly negative in the case $a\le0,\not\equiv0$. Next, recalling the arguments in Section \ref{sec-asymp}, from the above elliptic equation with Laplacian parameter $s$, we can further analytically extend $\wh u(s)$ from the 
domain $\{s>M\}$ to $\{s>0\}$ and satisfies
\begin{equation}
\label{esti-u1}
\|\wh u(s)\|_{H^1(\Om)}\le C\|a\|_{L^2(\Om)} \sum_{j=1}^\ell s^{\al_j-1}, \quad s>0,
\end{equation}
which implies that
\begin{equation}
\label{esti-u2<u0}
\|s\wh u(s)\|_{H^1(\Om)}
\le C\|a\|_\LL \sum_{j=1}^\ell s^{\al_j}, \quad 
\|s\wh{\wt u}(s)\|_{H^1(\Om)}
\le C\|a\|_\LL \sum_{j=1}^\ell s^{\wt\al_j}, \quad s>0.
\end{equation}

Now by taking the difference of the above two systems, it turns out that the system for 
$v:=\wh u-\wh{\wt u}$ reads
\begin{equation}
\label{equ-v}
\left\{\begin{alignedat}{2}
& \sum_{j=1}^\ell q_j(x) s^{\al_j} v(s) + (\mathcal A -b)v(s)
=(a - s\wh{\wt u}(s))\sum_{j=1}^\ell q_j(x) (s^{\al_j-1}-s^{\wt\al_j-1}),
&\quad& x\in\Om, \\
& \wh v(s)=0, &\quad& x\in\pa\Om.
\end{alignedat}\right.
\end{equation}

We prove our theorem by contradiction. We assume that $(\al_\ell,\dots,\al_1)\ne(\wt\al_\ell,\dots,\wt\al_1)$ and by $j_0$ denote the largest index such that 
$\al_{j}\ne\wt\al_{j}$, that is, $\al_{j_0}\ne\wt\al_{j_0}$ and $\al_j=\wt\al_j$ for $j\ge j_0+1$.

Without loss of generality, we assume that $\al_{j_0}<\wt\al_{j_0}$. Then by dividing the equations in \eqref{equ-v} by $\sum_{j=1}^\ell \left|s^{\al_j-1}-s^{\wt\al_j-1}\right|$, we see that
\begin{equation}
\label{equ-w}
\left\{\begin{alignedat}{2}
& \sum_{j=1}^\ell q_j(x) s^{\al_j} w(s) + (\mathcal A -b)w(s)
=(a - s\wh{\wt u}(s))Q_1(s), 
&\quad& x\in\Om, \\
& w(s)=0, &\quad& x\in\pa\Om,
\end{alignedat}\right.
\end{equation}
where we set
$$
w(s):=\f{sv(s)}{\sum_{j=1}^\ell \left|s^{\al_j}-s^{\wt\al_j}\right|},\quad
Q_1(s):=\f{\sum_{j=1}^\ell q_j(x) (s^{\al_j}-s^{\wt\al_j})}{\sum_{j=1}^\ell \left|s^{\al_j}-s^{\wt\al_j}\right|}.
$$
From the continuity of the analytic functions, we assert that  the following estimate
$$
\sum_{j=1}^\ell |s^{\al_j}-s^{\wt\al_j}|
=s^{\al_{j_0}}\sum_{j=1}^{j_0} |s^{\al_j-\al_{j_0}}-s^{\wt\al_j-\al_{j_0}}|>0
$$
holds true for any $0<s<1$. Therefore, $w(s)$ is well-defined for any $0<s<1$. From the property of the Laplace transforms, it follows that $\wh u(s)$ and $\wh{\wt u}(s)$ are analytic with respect to $s>0$ so that $w(s)$ is continuous for $s\in(0,1)$ in view of the definition of $w$. Moreover, by employing an argument similar to the estimate \eqref{esti-u2<u0} to system \eqref{equ-v}, and noting that $\al_j=\wt\al_j$ in the case of $j>j_0$, we conclude that 
$$
\|sv(s)\|_{H^1(\Om)}
\le C\|s\wh{\wt u}(s) - a\|_\LL \sum_{j=1}^\ell |s^{\wt\al_{j}} - s^{\al_{j}}|
\le C\|a\|_\LL(C\sum_{j=1}^\ell s^{\wt\al_j} + 1)\sum_{j=1}^{j_0} |s^{\wt\al_{j}} - s^{\al_{j}}|,
$$
hence that

$$
\|w(s)\|_{H^1(\Om)}
= \f{\|sv(s)\|_{H^1(\Om)}}{\sum_{j=1}^{j_0} \left|s^{\wt\al_j}-s^{\al_j}\right|}
\le C\|a\|_\LL(C\sum_{j=1}^\ell s^{\wt\al_j} +1),
$$
for all $0<s<1$. Furthermore, the above inequality implies 
$$
\|w(s)\|_{H^1(\Om)}
\le C(s^{\wt\al_\ell} +1) \|a\|_\LL,
$$
for small $0<s<1$. By above inequality, we also have
\begin{equation}
\label{esti-sw}
\lim_{s\to0+} s^{\al_j}w(s) = 0,\quad j=1,\dots,\ell.
\end{equation}
Next we claim that the limit of $w(s)$ exists as $s\to0+$ and 
$$
\begin{cases}
\lim_{s\to0+}w(s)\ge0, &\mbox{if $a\ge0,\not\equiv0$,}\\
\lim_{s\to0+}w(s)\le0, &\mbox{if $a\le0,\not\equiv0$.}
\end{cases}
$$
For this, we first need to prove that 
\begin{equation}
\label{lim-Q}
\lim_{s\to0+}Q_1(x\,;s)=q_{j_0}(x), \quad \text{for any } x\in\Om.
\end{equation}
In fact, for the proof, we just need to discuss small $s\in (0,1)$. By our assumption $\al_{j_0}<\wt\al_{j_0}$, we have $s^{\al_{j_0}}-s^{\wt\al_{j_0}}>0$ for small $0<s<1$. Thus, we rewrite 
$$
Q_1(s)=\f{\sum_{j=1}^\ell q_j(x) (s^{\al_j}-s^{\wt\al_j})}{\sum_{j=1}^\ell \left|s^{\al_j}-s^{\wt\al_j}\right|} 
= \f{q_{j_0}(x) + \sum_{j=1}^{j_0 -1}q_j(x)\f{s^{\al_j}-s^{\wt\al_j}}{s^{\al_{j_0}}-s^{\wt\al_{j_0}}}}{\sum_{j=1}^{j_0}\f{|s^{\al_j}-s^{\wt\al_j}|}{|s^{\al_{j_0}}-s^{\wt\al_{j_0}}|}\!\cdot\! \f{|s^{\al_{j_0}}-s^{\wt\al_{j_0}}|}{s^{\al_{j_0}}-s^{\wt\al_{j_0}}}}
= \f{q_{j_0}(x) + \sum_{j=1}^{j_0-1}q_j(x)A_j(s)}{1+ \sum_{j=1}^{j_0-1}|A_j(s)|},
$$
where
$$
A_j(s) :=\f{s^{\al_j}-s^{\wt\al_j}}{s^{\al_{j_0}}-s^{\wt\al_{j_0}}}, \quad 1\le j\le j_0-1. 
$$
We now prove that $\lim_{s\to0+}A_j(s)=0$ for any $1\le j\le j_0-1$ which immediately implies \eqref{lim-Q}. In fact, there are the following three cases:

\noindent (\romannumeral1) $\al_j>\wt\al_j$ :
$$
\lim_{s\to0+} A_j(s) = \lim_{s\to0+} \f{s^{\al_j-\wt\al_j}-1}{s^{\al_{j_0}-\wt\al_j} - s^{\wt\al_{j_0}-\wt\al_j}} = \lim_{s\to0+} \f{s^{\al_j-\wt\al_j}-1}{s^{\wt\al_{j_0}-\wt\al_j}(s^{\al_{j_0}-\wt\al_{j_0}}-1)}.
$$
Since $\al_j>\wt\al_j$, $\wt\al_{j_0}<\wt\al_j$ ($1\le j\le j_0-1$) and $\al_{j_0}<\wt\al_{j_0}$, we have 
$$
\lim_{s\to0+} (s^{\al_j-\wt\al_j}-1) = -1,\quad \lim_{s\to0+} s^{\wt\al_{j_0}-\wt\al_j} = \infty, \quad \lim_{s\to0+}(s^{\al_{j_0}-\wt\al_{j_0}}-1) = \infty,
$$
which leads to $\lim_{s\to0+}A_j(s)=0$.
\vspace{0.2cm}

\noindent (\romannumeral2) $\al_j=\wt\al_j$ : trivial, $A_j(s)=0$ for any $s\in (0,1)$.
\vspace{0.2cm}

\noindent (\romannumeral3) $\al_j<\wt\al_j$ :
$$
\lim_{s\to0+} A_j(s) = \lim_{s\to0+} \f{s^{\wt\al_j}-s^{\al_j}}{s^{\wt\al_{j_0}}-s^{\al_{j_0}}} = \lim_{s\to0+} \f{s^{\wt\al_j-\al_j}-1}{s^{\al_{j_0}-\al_j}(s^{\wt\al_{j_0}-\al_{j_0}}-1)}.
$$
Since $\al_j<\wt\al_j$, $\al_{j_0}<\al_j$ and $\al_{j_0}<\wt\al_{j_0}$, we have 
$$
\lim_{s\to0+} (s^{\wt\al_j-\al_j}-1) = -1,\quad \lim_{s\to0+} s^{\al_{j_0}-\al_j} = \infty, \quad \lim_{s\to0+}(s^{\wt\al_{j_0}-\al_{j_0}}-1) = -1,
$$
which leads to $\lim_{s\to0+}A_j(s)=0$.

Next we consider the following problem
\begin{equation}
\label{equ-w0}
\left\{\begin{alignedat}{2}
& (\mathcal A-b)w_0(x)
=aq_{j_0}, 
&\quad& x\in\Om, \\
& w_0(x)=0, &\quad& x\in\pa\Om,
\end{alignedat}\right.
\end{equation}

We evaluate $w(s)-w_0$ by investigating the difference of the two systems \eqref{equ-w} and \eqref{equ-w0}, say, the following boundary value problem
\begin{equation}
\label{equ-ww0}
\left\{\begin{alignedat}{2}
&  (\mathcal A-b)(w(s) - w_0)
=-\sum_{j=1}^\ell q_j(x) s^{\al_j} w(s) - s\wh{\wt u}(s)Q_1(s) + a(Q_1(s)-q_{j_0}), 
&\quad& x\in\Om, \\
& w(s) - w_0 = 0, &\quad& x\in\pa\Om,
\end{alignedat}\right.
\end{equation}

Combined with \eqref{esti-u2<u0}, noting that $|Q_1(s)|\le \max_{1\le j\le \ell}\|q_j\|_{L^\infty(\Om)}$, we conclude that
$$
\|(\mathcal A-b)(w(s) - w_0)\|_\LL
\le C\sum_{j=1}^\ell s^{\al_j}\|w(s)\|_\LL + Cs^{\wt \al_\ell}\|a\|_\LL + C\|(Q_1(s)-q_{j_0}) a\|_\LL,
$$
for all $0<s<1$. Then by relations \eqref{esti-sw} and \eqref{lim-Q}, we obtain

$$
\lim_{s\to0+}\|(\mathcal A-b)(w(s) - w_0)\|_{L^2(\Om)}=0.
$$
Since $w(s)-w_0 = 0$ on $\pa\Om$, in terms of the regularity argument for elliptic equations, we have

$$
\|w(s) - w_0\|_{H^2(\Om)} \le C\|(\mathcal A-b)(w(s) - w_0)\|_{L^2(\Om)} + C\|w(s) - w_0\|_{L^2(\Om)}.
$$
In fact, the last term on the RHS vanishes thanks to the uniqueness of the solution to $(\mathcal A-b)u=f$ with Dirichlet boundary condition under $b\ge0$, which implies

$$
\lim_{s\to0+}\|w(s) - w_0\|_{H^2(\Om)}=0.
$$
From the Sobolev embedding theorem, we see that $H^2(\Om)$ can be embedded into $C(\ov\Om)$ in the case of $d=1,2,3$, and then
$$
\lim_{s\to0+} w(x;s) - w_0(x) = 0
$$
holds true for any $x\in\Om$. 
On the other hand, we conclude from the strong maximum principle for the elliptic equations that 
$$
\begin{cases}
w_0>0 \mbox{ in } \Omega & \mbox{if } a\ge0,\not\equiv 0,\\
w_0<0 \mbox{ in } \Omega & \mbox{if } a\le0,\not\equiv 0. 
\end{cases}
$$
Consequently
$$
\lim_{s\to0+} |w(x_0;s)|=|w_0(x_0)|>0
$$
where $x_0\in\Om$ is the fixed interior point in the theorem.
 
Therefore, we can choose $\ve_0>0$ such that $|w(x_0;s)|>0$ for any $s\in(0,\ve_0)$, which implies that 
$$
s|v(x_0;s)|>0,\quad s\in(0,\ve_0),
$$
hence that
$$
\wh u(x_0;s) \ne \wh{\wt u}(x_0;s),\quad s\in(0,\ve_0).
$$
This is a contradiction bacause $u(x_0,\cdot)=\wt u(x_0,\cdot)$ in $(0,T)$ combining the analyticity of $u,\wt u$ in $(0,\infty)$ implies that $\wh u(x_0;s)=\wh{\wt u}(x_0;s)$ for any $s>0$. By contradiction, we must have
$$
(\al_\ell,\dots,\al_1)=(\wt\al_\ell,\dots,\wt\al_1).
$$
Remind that we have assumed $\al_{j_0}<\wt\al_{j_0}$. However, in the opposite case, we just need to change the positions of $u$ and $\wt u$. 
This completes the proof of the theorem.
\end{proof}

\section{Concluding remarks}
\label{sec-rem}
In this paper, we considered the initial-boundary value problem for the multi-term time-fractional diffusion equations. For the forward problem,  by regarding the lower fractional order term as a perturbation for the highest fractional term and via the Mittag-Leffler function and the eigenfunction expansion argument, we gave an integral equation of the solution $u$ to the initial-boundary value problem, from which we verified the well-posedness of the initial-boundary value problem \eqref{equ-multifrac} including the unique existence and the analyticity of the solution by employing a general Gronwall inequality and fixed point method. Moreover, in the case where all the coefficients of the time-fractional derivatives are positive, by a Laplace argument, it turns out that the equation demonstrates a polynomial decay of its solution at infinity, and the decay rate of the solution for long time is dominated by $t^{-\alpha_\ell}$, which can be regarded as a generalization of the asymptotic behavior result in Li et al. \cite{LiLiuYa14} where they dealt with the case of positive-constant coefficients. For the inverse problem, on the basis of analyticity of the solution, we showed that the fractional orders can be uniquely determined from one interior point observation. Here we should mention that in the proofs of our results, we need the assumption that all the coefficients are only $x$-dependent. It will be more interesting and challenging to consider what happens with the properties of the solutions in the case where the coefficients are both $t$- and $x$- dependent.

\section*{Acknowledgement}
The first author thanks Grant-in-Aid for Research Activity Start-up 16H06712, JSPS. The second author thanks the Leading Graduate Course for Frontiers of Mathematical Sciences and Physics (FMSP, the University of Tokyo). This work was supported by A3 Foresight Program \lq\lq Modeling and Computation of Applied Inverse Problems\rq\rq, Japan Society of the Promotion of Science (JSPS). The third author is supported by Grant-in-Aid for Scientific Research (S) 15H05740, JSPS.


 \end{document}